\newcommand{\PP}{{\mathbb P}}
\newcommand{\II}{{\mathcal I}}
\newcommand{\Bs}{{\rm{Bs}}}
\newcommand{\ls}{{\mathcal{L}}}
\newcommand{\Nn}{{\rm{N}}}
\newcommand{\NE}{{\rm{NE}}}
\newcommand{\paper}{: \begin{it}}
\newcommand{\jour }{, \end{it}}
\newtheorem{theorem}{Theorem}[section]
\newtheorem{lemma}[theorem]{Lemma}
\newtheorem{proposition}[theorem]{Proposition}
\newtheorem{corollary}[theorem]{Corollary}
\newtheorem{conjecture}[theorem]{Conjecture}
\newtheorem{definition}[theorem]{Definition}
\newtheorem{notation}[theorem]{Notation}
\newtheorem{example}[theorem]{Example}
\newtheorem{remark}[theorem]{Remark}
\numberwithin{equation}{section}
\definecolor{mygreen}{rgb}{0.13, 0.55, 0.13}
\title{Positivity of divisors on blown-up projective spaces, I}
\author{Olivia Dumitrescu}
\address{
Olivia Dumitrescu:
Central Michigan University\\
Pearce Hall 209\\
Mt. Pleasant, Michigan 48859, US\\}
\address{and Simion Stoilow Institute of Mathematics\\
Romanian Academy\\
21 Calea Grivitei Street\\
010702 Bucharest, Romania}
\email{dumit1om@cmich.edu}
\author{Elisa Postinghel}
\address{Elisa Postinghel:
Department of Mathematical Sciences, Loughborough University, LE11 3TU, UK}
\email{E.Postinghel@lboro.ac.uk}
\thanks{The first author is a member of 
the Simion Stoilow Institute of Mathematics of the 
Romanian Academy.}
\keywords{$k$-very ample line bundles, nef divisors, Fujita's conjecture}
\subjclass[2010]{Primary: 14C20 Secondary: 14E25, 14C17, 14J70}
\begin{document}

\begin{abstract} 
We study $l$-very ample, ample and semi-ample
divisors on the blown-up projective space
$\PP^n$ in a collection of points in general position. 
We establish Fujita's conjectures for all ample 
divisors with the number of points bounded above by $2n$ and for an infinite family of 
ample divisors with an arbitrary number of points.
\end{abstract}

\maketitle

\section*{Introduction}

Ample line bundles are fundamental objects in Algebraic Geometry. 
From the geometric perspective, an ample line bundle is one such that some positive multiple of the underlying divisor moves in a linear system that is large enough to give a projective embedding. In numerical terms a divisor is ample if and only if it lies in the interior of the real cone generated by nef divisors (Kleiman). Equivalently, a divisor is ample if it intersects positively every closed integral subscheme (Nakai-Moishezon).
In cohomological terms, an ample line bundle is one such that a twist of any coherent sheaf by some power is generated by  global sections (Serre). Over the complex numbers, ampleness of line bundles is also equivalent to the existence of a metric with positive curvature (Kodaira).

The very ampleness of divisors on blow-ups of projective spaces and other varieties was studied
by several authors, e.g. Beltrametti and  Sommese \cite{bs}, Ballico and Coppens \cite{baco}, Coppens
\cite{coppens,coppens2},  Harbourne \cite{Ha}.
The notion of $l$-very ampleness of line bundles was introduced by Beltrametti, Francia and Sommese \cite{bfs} and $l$-very ample line bundles on del Pezzo surfaces were classified by Di Rocco \cite{dirocco}.

This paper studies ampleness, $l$-very ampleness and further positivity questions of divisors on blow-ups of projective spaces of higher dimension  in an arbitrary number of points in  general position.

The main tools used are the vanishing theorems for divisors on blown-up projective spaces in points in general position that were proved in \cite{dp}. Generalization of these results to the case of points in arbitrary position
are studied in \cite{badp}.

Vanishing theorems  for divisors on blown-up spaces were firstly used in order to give a solution to the corresponding
 interpolation problem, 
namely to compute the
dimension of the linear system of divisors
 on blown-up projective spaces in points in general position.
The case of linear systems whose  base locus consisted only 
of the union of the linear cycles spanned by the points with multiplicity was studied in \cite{bdp1} where, in particular, a formula
for the dimension of all linear systems with $s\le n+2$ points was given. 
The fact that the strict transform of these linear systems via a resolution of the base locus 
is globally generated is proved in a forthcoming paper \cite{dp-pos2}.
Moreover, in \cite{bdp3} a conjectural formula for the dimension 
of all linear systems with $n+3$ points, that takes into account the contributions given 
by the presence in the base locus of the (unique) rational normal curve of degree $n$ through the $n+3$ 
points and the joins of its secants with the linear subspaces spanned by the points, is given.
In \cite{bdp2} linear systems in $\PP^3$  that contain 
the unique quadric through nine points in their fixed locus are studied and, moreover, Nagata-type results are given
for planar linear systems.

In this paper we employ  the vanishing theorems to prove a a number of positivity properties. 
A first application of the vanishing theorems  is the 
description of {\it $l$-very ample divisors}, in particular {\it globally generated} 
divisors and {\it very ample} divisors contained in  Theorem \ref{tva theorem X0}.

Moreover we establish  {\it Fujita's conjectures} for $\PP^n$ blown-up in $s$ points when $s\leq 2n$, Proposition \ref{fujita answer}, and for an infinite family of divisors for arbitrary $s$, with a bound on the coefficients, Proposition \ref{fujita answer2}.

This paper is organized as follows. In Section \ref{notations} we introduce the general construction, notation and some preliminary facts. 
Section \ref{lva section} contains one of the main results of this article, 
Theorem \ref{tva theorem X0}, that concerns  $l$-very ampleness
of line bundles on blown-up projective spaces in an arbitrary number of points in  general position.
In Section \ref{other positivity} we characterize other positivity properties of divisors on  blown-up projective spaces at points such as nefness, ampleness, bigness, and we establish Fujita's conjecture.

\subsection*{Acknowledgments} 
The authors would like to thank the Research Center FBK-CIRM  Trento for the hospitality and financial support during their one month ``Research in Pairs'', Winter 2015. 
We would also like to express our gratitude to Edoardo Ballico for several useful discussions.

\section{Preliminary results and conjectures}\label{notations}
Let $K$ be an algebraically closed field of characteristic zero.
 Let $\mathcal{S}=\{p_1,\dots,p_s\}$ be a collection of $s$ distinct points in $\PP^n_K$ and
let $S$ be the set of indices parametrizing $\mathcal{S}$, with $|S|=s$.
 
Let
\begin{equation}\label{linear system}
\ls:=\ls_{n,d}(m_1,\dots,m_s)
\end{equation}
denote the linear system of degree-$d$ hypersurfaces of $\PP^n$ with multiplicity at least $m_i$ at $p_i$, for $i=1,\dots,s$.

\subsection{The blow-up of $\PP^n$}\label{blow-up in points}
Assume $\mathcal{S}$ consists of  points in general position. 
 We denote by $X_{s}$ the blow-up of $\PP^n$ in the
 points of $\mathcal{S}$ and by $E_i$ the exceptional divisor of $p_i$, for all $i$.
 The Picard group of $X_{s}$ is spanned by the class of a general hyperplane, $H$, and the classes of the
 exceptional divisors $E_{i}$, $i=1,\dots,s$. 

\begin{notation}\label{generality definition}
Fix positive integers $d, m_1, \ldots, m_s$ and define the following divisor on $X_{s}$:
  \begin{equation}\label{def D}
  dH-\sum_{i=1}^{s} m_i E_i\in\textrm{Pic}\left(X_{s}\right).
  \end{equation}
In this paper we denote by $D$  a \textit{general divisor} in $|dH-\sum_{i=1}^{s} m_i E_i|$. Notice that
the global sections of $D$ are in bijection with the elements 
of the linear system $\ls$ defined in \eqref{linear system}.
  \end{notation}
\begin{remark}\label{generality remark}
It is proved in \cite[Proposition 2.3]{CC} and \cite[Lemma 4.2]{CDDGP} that for any divisor $D$ of the form \eqref{def D}, the general member of $|D|$
 has multiplicity equal to $m_i$ at the point $p_i$. It is important to mention here that 
even if it is often omitted in the framework of classical interpolation problems in $\PP^n$,
 the generality hypothesis of the divisor $D$ is always assumed. 
\end{remark}

For any effective divisor on $X_{s}$ denote by $s(d):=s_D(d)$ the number of points in $\mathcal{S}$  
of which the multiplicity equals $d$; this number depending on $\ls$ or, equivalently, $D$. 
We we use the following integer, that was introduced in \cite[Theorem 5.3]{bdp1}:
\begin{equation}\label{b bound}
b=b(D):= \min\{n -s(d), s-n-2\}.
\end{equation}

\begin{theorem}[{\cite[Theorem 5.3]{bdp1}, \cite[Theorem 5.12]{dp}}]\label{vanishing theorems}
Assume that $\mathcal{S}\subset\PP^n$ is a set of points in  general position. Let $D$ be as in \eqref{def D}. Assume that
\begin{equation}\label{cond vanishings}
\begin{split}
& 0\le m_i, \ \forall i\in\{1,\dots,s\},\\ 
&m_i+m_j\le d+1, \ \forall i,j\in\{1,\dots,s\},\ i\ne j, \ (\textrm{if } s\ge2),\\
&\sum_{i=1}^{s} m_i\le nd+\left\{ \begin{array}{ll}
n & \textrm{ if } s\le n+1\\
1 & \textrm{ if } s= n+2\\
b & \textrm{ if }  s\ge n+3\end{array}\right. 
\end{split}
\end{equation}
Then $h^1(X_{s},D)=0$.
\end{theorem}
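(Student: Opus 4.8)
The plan is to prove the vanishing by a Castelnuovo-type restriction argument combined with a Horace degeneration of the points onto a hyperplane, organized as a double induction: an outer induction on the dimension $n$ and an inner induction on the degree $d$ (using $\sum_i m_i$ as a tie-breaker). The base cases are $n=1$, where $X_{s,(0)}$ is $\PP^1$ blown up at points and the assertion reduces to an elementary count on $\PP^1$, together with the degenerate cases $d\le 1$ or all $m_i=0$. Since $h^1$ is upper semicontinuous in flat families, it suffices to exhibit \emph{one} special position of the $s$ points for which $h^1(X_{s,(0)},D)=0$, as the general-position vanishing then follows by semicontinuity in the favorable direction.

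Concretely, I would fix a hyperplane $H\cong\PP^{n-1}$ and specialize a carefully chosen subset $T\subseteq S$ to lie in general position inside $H$, keeping the remaining points general in $\PP^n$. Writing $\tilde H=H-\sum_{i\in T}E_i$ for the strict transform, the restriction sequence $0\to\OO_X(D-\tilde H)\to\OO_X(D)\to\OO_{\tilde H}(D|_{\tilde H})\to0$ yields $H^1(X,D-\tilde H)\to H^1(X,D)\to H^1(\tilde H,D|_{\tilde H})$, so $H^1(X,D)=0$ as soon as $H^1$ vanishes for the residue $D-\tilde H=\ls_{n,d-1}(\{m_i-1\}_{i\in T},\{m_i\}_{i\notin T})$ and for the trace $D|_{\tilde H}=\ls_{n-1,d}(\{m_i\}_{i\in T})$. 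The trace lives on a blow-up of $\PP^{n-1}$ and is handled by the outer induction on $n$, while the residue has smaller degree and is handled by the inner induction on $d$.

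The whole argument then hinges on choosing $T$ so that both the trace and the residue again satisfy the three hypotheses \eqref{cond vanishings}, and this bookkeeping is where I expect the real difficulty to lie. The pairwise bound is the delicate one: passing to degree $d-1$ in the residue requires $m_i+m_j\le d$ for every pair of points \emph{off} $H$, so $T$ must meet every tight pair (a pair with $m_i+m_j=d+1$); in graph-theoretic terms $T$ must be a vertex cover of the graph of tight pairs. The total bound pulls the other way: a short computation shows the residue inherits $\sum m_i \le n(d-1)+\max\{1,s-n-2\}$ essentially when $|T|\ge n$ (with minor care when reduced multiplicities vanish), whereas the trace bound $\sum_{i\in T}m_i\le (n-1)d+\max\{1,|T|-n-1\}$ caps how much multiplicity may be moved onto $H$. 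Thus $T$ must simultaneously be large enough (cover the tight pairs and reach size $\ge n$) and light enough (small total multiplicity), and the crux is to prove such a $T$ always exists — or, in the extremal configurations where it does not, to replace the single-hyperplane degeneration by a more refined specialization, for instance a Horace-differential step. The term $\max\{1,s-n-2\}$, which switches behavior precisely at $s=n+3$, must be tracked carefully through every reduction.

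As an alternative route I would keep in mind blowing up the linear base locus — the tight lines $L_{ij}$, each contained with multiplicity one — to obtain $X'\to X$ and then applying Kawamata--Viehweg vanishing to the strict transform $D'$; here one would need $D'-K_{X'}$ to be nef and big, and the bound $\sum_i m_i\le nd+\max\{1,s-n-2\}$ is plausibly exactly the numerical input guaranteeing this. However, establishing the required nefness on $X'$ overlaps with the positivity statements proved later in the paper, so I would treat the degeneration argument as the primary, self-contained route and use the blow-up/vanishing picture only as a consistency check.
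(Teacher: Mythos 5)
First, note that the paper does not prove Theorem \ref{vanishing theorems} at all: it is imported verbatim from \cite{bdp1} and \cite{dp}, so there is no internal proof to compare against; your attempt has to stand on its own.

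As it stands it does not. The overall architecture (semicontinuity to a special position, restriction to a hyperplane, trace handled by induction on $n$, residual by induction on $d$) is the standard Horace framework and the bookkeeping you carry out for the residual and trace hypotheses is correct. But the entire proof rests on the existence of a subset $T$ that simultaneously (i) covers every tight pair, (ii) has $|T|\ge n$, and (iii) keeps $\sum_{i\in T}m_i$ below the trace bound, and you explicitly leave this unproved, offering only ``or replace the degeneration by a more refined specialization.'' This is not a removable technicality: such a $T$ genuinely fails to exist in cases covered by the theorem. Take $n=3$, $s=5$, $D$ corresponding to $\ls_{3,3}(2,2,2,2,2)$; the hypotheses \eqref{cond vanishings} hold ($m_i+m_j=4=d+1$, $\sum m_i=10=3d+1$), every pair is tight so any vertex cover has $|T|\ge 4$, and then the trace is $\ls_{2,3}(2,2,2,2)$, which is well known to be special ($h^1=2$), so the restriction sequence gives nothing. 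Hence the fallback (a differential Horace step, or a different specialization altogether) is not optional but is the actual content of the proof, and it is missing.

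There is a second gap. When $|T|\ge n+1$ the specialization puts more than $n$ points on a hyperplane, so the residual system's points are no longer in linearly general position; upper semicontinuity only lets you pass from the special to the general configuration for the \emph{original} divisor, not for the residual. Your inner induction on $d$ invokes the theorem (stated only for points in general position) for a configuration to which it does not apply, so the induction statement would have to be strengthened to cover these degenerate configurations, or the restriction iterated until the points on $H$ are exhausted. For what it is worth, the two cited sources take routes that sidestep exactly these issues: \cite{dp} in particular obtains the vanishing by passing to the iterated blow-up of the linear base locus and controlling the cohomology of the exceptional divisors (essentially your ``alternative route,'' which here is the primary one, and which does not require Kawamata--Viehweg), rather than by a single-hyperplane Horace degeneration.
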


\subsection{Base locus and effectivity of divisors on the blow-up of $\PP^n$ in points}
\label{base locus and effectivity}

We adopt the same notation as in Subsection \ref{blow-up in points}. 
For any integer 
$0\le r\le  n-1$ and for any multi-index of cardinality $r+1$, 
$$I:=\{i_1,\ldots,i_{r+1}\}\subseteq\{1,\ldots,s\},$$ we define the integer 
$k_{I}$ to be the \emph{multiplicity of containment} of the strict transform in 
$X_{s}$ of the linear cycle spanned by the points parametrized by $I$, $L_{I}\subset\PP^n$, in
 the base locus of  $D$. Notice that under the generality assumption we have $L_I\cong\PP^r$.

\begin{lemma}[{\cite[Proposition 4.2]{dp}}]\label{base locus lemma}
For any effective divisor $D$  as in \eqref{def D},
 the multiplicity of containment in $\Bs(|D|)$ of the strict transform in $X_{s}$ of the linear subspace
 $L_{I}$, with $0\leq r\leq n-1$, is the integer
\begin{equation}\label{mult k} 
k_{I}=k_I(D):=\max \{0,m_{i_1}+\cdots+m_{i_{r+1}}-rd\}.
\end{equation}
\end{lemma}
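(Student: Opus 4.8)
The plan is to identify $k_I$ with a generic order of vanishing and then establish the formula by two opposite inequalities. First I would reformulate the statement: the generic point of the strict transform $\tilde{L}_I\subset X_{s,(0)}$ lies over the generic point of $L_I$, away from the base points $p_i$, so the multiplicity of containment $k_I$ equals $\min_{F\in\ls}\ord_{L_I}(F)$, the least order of vanishing along $L_I$ of a degree\Ndash$d$ form in the system $\ls$ attached to $D$ as in \eqref{def D}. It therefore suffices to bound this quantity from below by $\max\{0,m_{i_1}+\cdots+m_{i_{r+1}}-rd\}$ for every $F$, and to exhibit one $F$ attaining it.

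For the lower bound I would argue by a direct coordinate computation. Since the $r+1$ points are in general position and span $L_I\cong\PP^r$, I may choose homogeneous coordinates $x_0,\dots,x_n$ on $\PP^n$ so that $p_{i_j}=[e_{j-1}]$ for $j=1,\dots,r+1$ and $L_I=\{x_{r+1}=\cdots=x_n=0\}$. Writing $F=\sum_\alpha c_\alpha x^\alpha$, where $a_l$ denotes the exponent of $x_l$, the condition $\mult_{p_{i_j}}(F)\ge m_{i_j}$ forces $a_{j-1}\le d-m_{i_j}$ for every monomial with $c_\alpha\ne 0$. The order of vanishing along $L_I$ is read off as the least total degree in the transverse variables $x_{r+1},\dots,x_n$, namely the minimum of $d-(a_0+\cdots+a_r)$, which is at least $d-\sum_{j=1}^{r+1}(d-m_{i_j})=m_{i_1}+\cdots+m_{i_{r+1}}-rd$. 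As the conditions at the remaining base points can only raise this order, I obtain $\ord_{L_I}(F)\ge\max\{0,m_{i_1}+\cdots+m_{i_{r+1}}-rd\}$ for every $F\in\ls$, hence $k_I\ge\max\{0,m_{i_1}+\cdots+m_{i_{r+1}}-rd\}$.

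For the upper bound I would produce one divisor in $|D|$ whose order along $L_I$ equals $k:=\max\{0,m_{i_1}+\cdots+m_{i_{r+1}}-rd\}$. Restricting to a general $(r+1)$\Ndash plane $\Pi\supseteq L_I$ reduces the model to $\PP^{r+1}$, in which $L_I$ is a hyperplane: one factors out its equation to the maximal power and is left with a residual form whose restriction to $L_I\cong\PP^r$ is a nonzero degree\Ndash$(d-k)$ form with multiplicity $m_{i_j}-k$ at the $r+1$ frame points $p_{i_j}$. The elementary bound that a nonzero degree\Ndash$e$ form on $\PP^r$ can carry total multiplicity at most $re$ at $r+1$ points in general position both forces $k\ge m_{i_1}+\cdots+m_{i_{r+1}}-rd$ as the threshold at which such a nonzero residual can appear and shows, via its equality case, that the threshold is attained; when $m_{i_1}+\cdots+m_{i_{r+1}}\le rd$ the analogous monomial already restricts to a nonzero form on $L_I$, giving order $0$.

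The main obstacle is to promote such a form on the restricted model to an honest element of $\ls$ on $\PP^n$, that is, to guarantee that the multiplicity conditions at all $s$ base points impose no unexpected extra vanishing along $L_I$. This is precisely where the general position of $\mathcal{S}$ enters: it should ensure that the relevant restriction map is nonzero, so that the minimum over $\ls$ is not strictly larger than $k$. I expect to settle this by induction on the dimension $r$, peeling off the lower\Ndash dimensional linear cycles contained in $L_I$, combined with an upper\Ndash semicontinuity argument comparing the general configuration with a convenient degeneration. Together with the lower bound this gives $k_I=\max\{0,m_{i_1}+\cdots+m_{i_{r+1}}-rd\}$, as asserted in \eqref{mult k}.
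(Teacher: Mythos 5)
First, a point of comparison: the paper does not prove this lemma at all --- it is imported verbatim from \cite[Proposition 4.2]{dp} --- so there is no in-paper argument to measure yours against, and the statement you are proving is genuinely the content of that external reference. Judged on its own terms, your proposal is half complete. The reformulation of the multiplicity of containment as $\min_{F\in\ls}\ord_{L_I}(F)$ and the lower bound $k_I\ge\max\{0,m_{i_1}+\cdots+m_{i_{r+1}}-rd\}$ are correct: in your adapted coordinates the condition $\mult_{p_{i_j}}(F)\ge m_{i_j}$ is exactly $a_{j-1}\le d-m_{i_j}$ monomial by monomial, there is no cancellation once one groups monomials by their transverse part in $x_{r+1},\dots,x_n$, and the stated inequality for $\ord_{L_I}(F)$ follows. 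This half uses nothing beyond the linear independence of the $r+1$ points.

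The gap is in the upper bound, and you have located it yourself without closing it. To show $k_I\le\max\{0,\sum_j m_{i_j}-rd\}$ you must exhibit one $F\in\ls$ with $\ord_{L_I}(F)$ exactly $k_I$, i.e.\ show that the $s-r-1$ multiplicity conditions at the points outside $I$ impose no extra vanishing along $L_I$. The ``equality case'' of the Bezout-type bound on $\PP^r$ only produces a form on $L_I$ (or on $\Pi$) with the right multiplicities at the $r+1$ points of $I$; it says nothing about whether that form is the restriction of an element of $\ls$. Your proposed remedy --- induction on $r$ plus an upper-semicontinuity comparison with a degenerate configuration of the residual points --- does not work as stated: both $\dim\ls$ and $\dim\{F\in\ls:\ord_{L_I}(F)\ge k_I+1\}$ are only upper semicontinuous in the configuration, so verifying the strict inequality of these dimensions at a convenient special position does not transfer to the general position unless you also show that $\dim\ls$ does not jump there; for large $s$ controlling $\dim\ls$ is precisely the (partly conjectural) dimensionality problem discussed in the paper. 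An explicit construction by products of hyperplanes through subsets of the $p_i$ also fails once $\sum_i m_i>nd$, which effectivity does not exclude for $s\ge n+3$. This nonvanishing of the relevant restriction is the actual substance of \cite[Proposition 4.2]{dp}, and your write-up defers exactly that step.
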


\vskip.5em

\section{$l$-very ample divisors on $X_{s}$}\label{lva section}

\begin{definition}[\cite{bfs}]\label{definition k-va}
Let $X$ be a complex projective smooth variety.
For an integer $l\ge0$, a line bundle  $\mathcal{O}_X(D)$ on $X$ is said to be \textit{$l$-very ample}, 
if for any $0$-dimensional subscheme $Z\subset X$ of weight $h^0(Z,\mathcal{O}_Z)=l+1$, 
 the restriction map $H^0(X,\mathcal{O}_X(D))\to H^0(Z, \mathcal{O}_X(D)|_Z)$ is surjective.
\end{definition}

We will now recall some of the results obtained in the study of positivity of blown-up surfaces and higher dimensional projective spaces. Di Rocco
 \cite{dirocco} classified  $l$-very ample line bundles on del Pezzo surfaces, namely for $\PP^2$ blown-up at $s\le 8$ points in general position.
For general surfaces, very ample divisors on rational surfaces were considered by Harbourne \cite{Ha}.
De Volder and Laface \cite{DeLa1} classified 
 $l$-very ample divisors, for $l=0,1$, on the blow-up of $\PP^3$ at $s$ points lying on a certain quartic curve. Ampleness and very ampleness properties of divisors on blow-ups at points of higher dimensional 
projective spaces in the  case of points of multiplicity one
were studied by Angelini \cite{angelini}, Ballico \cite{ba} and Coppens \cite{coppens2}. 

 Positivity properties for blown-up $\PP^{n}$ in general points were considered 
by Castravet and Laface. In particular, for small number of points in general position, 
$s\leq 2n$, the semi-ample and nef cones, that we describe in this paper in Corollary \ref{nef cone}, were obtained via a different technique (private communication).

\vskip.3cm

We can describe $l$-very ample line bundles over $X_{s}$, the blown-up 
projective space at $s$ points in  general position,  whose underlying divisor is of the form 
\eqref{def D} $$D=dH-\sum_{i=1}^sm_iE_i,$$
as follows.

Take $l\ge 0$. For any $s\geq n+3$  we introduce the following integer:

\begin{equation}\label{b for l-va}
\begin{split}
& b_l:=\left\{ \begin{array}{ll}
\min\{n-1, s-n-2\}-l-1 & \textrm{ if } m_1=d-l-1 \textrm{ and }m_i= 1, i\ge 2 \\
 \min\{n,s-n-2\}-l-1 & \textrm{ elsewise,}\\\end{array}\right.
\end{split}
\end{equation}
while for $s\leq n+2$ define $b_l:=-l-1$

\begin{theorem}[$l$-very ample line bundles]\label{tva theorem X0}
Assume that $\mathcal{S}\subset\PP^n$ is a collection of points in general position.
Let $l$ be a non-negative integer. Assume that either $s\le 2n$ or  $s\ge 2n+1$ and $d$ large enough, namely 
\begin{equation}\label{tva equations X0 large s}
\sum_{i=1}^s m_i -nd \le b_l ,
\end{equation}
where $b_l$ is defined as in \eqref{b for l-va}.
Then a divisor $D$  of the form \eqref{def D}  is $l$-very ample if and only if
\begin{equation}\label{tva equations X0}
\begin{split}
&l\leq m_i, \ \forall i\in\{1,\dots,s\},\\   
&l \le d-m_i-m_j, \ \forall i,j\in\{1,\dots,s\},\ i\ne j. \\ 
\end{split}
\end{equation}
\end{theorem}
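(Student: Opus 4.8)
The plan is to prove both implications through the cohomological reformulation of Definition~\ref{definition k-va}. Since the hypotheses will force $h^1(X_{s,(0)},D)=0$, the bundle $\OO_{X_{s,(0)}}(D)$ is $l$-very ample if and only if $h^1(X_{s,(0)},\mathcal{I}_Z(D))=0$ for every $0$-dimensional subscheme $Z$ of length $l+1$, where $\mathcal{I}_Z$ denotes the ideal sheaf of $Z$.

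\emph{Necessity.} First I would record the elementary but crucial observation that if $\OO_X(D)$ is $l$-very ample then so is its restriction $\OO_Y(D|_Y)$ to any closed subvariety $Y\subset X$: for $Z\subset Y$ the restriction map factors as $H^0(X,D)\to H^0(Y,D|_Y)\to H^0(Z,\OO_Z(D))$, and surjectivity of the composite forces surjectivity of the second arrow. I would then apply this to two families of subvarieties of $X_{s,(0)}$. Taking $Y=E_i\cong\PP^{n-1}$, on which $D$ restricts to $\OO_{\PP^{n-1}}(m_i)$, and using that $\OO_{\PP^N}(a)$ is $l$-very ample precisely when $a\ge l$, yields $m_i\ge l$. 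Taking $Y$ to be the strict transform of the line through $p_i$ and $p_j$, a $\PP^1$ meeting $H$, $E_i$ and $E_j$ each once so that $D|_Y=\OO_{\PP^1}(d-m_i-m_j)$, yields $d-m_i-m_j\ge l$. Together these are exactly \eqref{tva equations X0}.

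\emph{Sufficiency.} Here I would first check that \eqref{tva equations X0} together with the regime hypotheses feed into the numerical conditions \eqref{cond vanishings}. Pairing the ordered multiplicities, the bounds $m_i+m_j\le d-l$ give the sharp estimate $\sum_i m_i\le\lceil s/2\rceil(d-l)$, hence $\sum_i m_i\le nd-nl$ when $s\le 2n$; in the range $s\ge 2n+1$ the bound \eqref{tva equations X0 large s} plays the same role. Crucially this leaves a slack of order $l$ in the sum condition. The core is then to prove $h^1(\mathcal{I}_Z(D))=0$ for all length-$(l+1)$ schemes $Z$. I would reduce to curvilinear $Z$ by the standard fact that failures of $l$-very ampleness localize on curvilinear subschemes, and treat the support in two parts. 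For the part of $Z$ meeting an exceptional divisor $E_i$ I would run the Castelnuovo/residual exact sequence
\[
0\to \mathcal{I}_{\mathrm{Res}_{E_i}Z}(D-E_i)\to \mathcal{I}_Z(D)\to \mathcal{I}_{Z\cap E_i,\,E_i}(D|_{E_i})\to 0,
\]
whose trace term vanishes because $D|_{E_i}=\OO_{\PP^{n-1}}(m_i)$ with $m_i\ge l$ is $l$-very ample, and whose residual term has strictly shorter length while living on $D-E_i$, bumping the multiplicity at $p_i$ by one. Iterating over the base points peels off the part of $Z$ near the $p_i$ in at most $l+1$ steps; both multiplicity conditions of \eqref{cond vanishings} persist, since $(m_i+c_i)+(m_j+c_j)\le(d-l)+(l+1)=d+1$ and $\sum(m_i+c_i)\le nd-nl+(l+1)\le nd+1$, so Theorem~\ref{vanishing theorems} applies to the final divisor. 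For the part of $Z$ supported away from the base points I would argue by a Horace-type induction: restricting to a general hyperplane $H$, away from the $p_i$ the space is locally $\PP^n$, $D|_H=\OO_{\PP^{n-1}}(d)$ with $d\ge l$, the trace imposes independent conditions by Castelnuovo--Mumford regularity of length-$(\le l+1)$ schemes, and the residual is controlled by a single degree drop, the base case $n=1$ being the $\PP^1$ computation of the necessity step.

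The hardest part will be the uniform bookkeeping in the sufficiency direction: organizing the residual induction so that, as $Z$ straddles several exceptional divisors and special lines simultaneously, every intermediate divisor still verifies \eqref{cond vanishings}. This is exactly where the slack created by $m_i\ge l$ and $m_i+m_j\le d-l$ --- and the precise regimes $s\le 2n$ or \eqref{tva equations X0 large s} --- become indispensable, as they are what absorb the up-to-$(l+1)$ increase in the multiplicities coming from $Z$; the reduction to curvilinear, and component by component connected, schemes is the other technical point that must be handled with care.
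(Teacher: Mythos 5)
Your reduction to the vanishing of $h^1(\II_Z(D))$ for length-$(l+1)$ schemes $Z$, and your necessity argument by restricting to $E_i\cong\PP^{n-1}$ and to the strict transforms of lines $\langle p_i,p_j\rangle$, are correct and in fact a little cleaner than the paper's (which exhibits explicit jet schemes on these subvarieties). The sufficiency direction, however, has a genuine gap in the step treating the part of $Z$ supported away from the exceptional divisors. You restrict to a \emph{general} hyperplane, so the residual divisor is $D-H$: the degree drops while every multiplicity stays the same. After the up-to-$(l+1)$ restrictions needed to exhaust $Z$, the residual is $(d-l-1)H-\sum m_iE_i$, and its sum condition for Theorem \ref{vanishing theorems} reads $\sum m_i\le n(d-l-1)+\max\{1,s-n-2\}$; the hypotheses only give $\sum m_i\le n(d-l)$, which misses by up to $n-1$. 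This is not a bookkeeping artifact: already for $n=2$, $s=4$, $l=0$, $D=2H-E_1-\cdots-E_4$ (which \emph{is} globally generated), a single general-line restriction at a general point $q$ leaves the residual $H-E_1-\cdots-E_4$ with $h^1=1$, so the exact sequence gives nothing. The paper's proof of Theorem \ref{toric tensored} (Cases (2.a)--(2.b)) avoids exactly this by choosing the hyperplane through $q'$ \emph{and} through $n$ (resp. $n-1$) of the base points, so that the multiplicities at those points drop in step with the degree and the quantity $\sum m_i-nd$ is preserved (or increases only by $l+1$, which is what the $-l$ in \eqref{tva equations X0 large s} and the $\max\{1,s-n-2\}$ slack absorb).

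Two further points. First, your appeal to ``failures of $l$-very ampleness localize on curvilinear subschemes'' is a nontrivial theorem (known for surfaces) and should not be treated as standard in dimension $n\ge 3$; the paper sidesteps it entirely by flatly degenerating the support of $Z$ into the homogeneous pieces $E_1,\dots,E_s$ and $X_{s,(0)}\setminus\bigcup E_i$ (Lemma \ref{flat degeneration}) and then dominating each punctual component by a fat point (Lemma \ref{inclusion}), reducing everything to the jet-spannedness statement of Theorem \ref{toric tensored}. Second, the ``uniform bookkeeping'' you flag as the hardest part --- $Z$ straddling several $E_i$ and special lines at once --- is precisely what this degeneration-plus-fat-points device resolves, and your proposal leaves it open. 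So the skeleton is right, but the sufficiency argument needs the paper's choice of non-general hyperplanes (or an equivalent device) to close.
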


\begin{remark}
When $l=0$ ($l=1$), $l$-very ampleness corresponds to \emph{global generation}, or \emph{spannedness}
(resp. very ampleness).
\end{remark}

\begin{corollary}[Globally generated line bundles]\label{gg corollary}
In the same notation of Theorem \ref{tva theorem X0},
assume that either $s\le 2n$ or  $s\ge 2n+1$ and
$$\sum_{i=1}^sm_i -nd \le b_0.$$
Then $D$
 is globally generated if and only if
\begin{equation}\label{gg equations X0}
\begin{split}
& 0 \leq m_i, \ \forall i\in\{1,\dots,s\},\\   
& 0 \le d-m_i-m_j, \ \forall i,j\in\{1,\dots,s\},\ i\ne j.\\ 
\end{split}
\end{equation}
\end{corollary}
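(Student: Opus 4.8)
The plan is to recognize Corollary \ref{gg corollary} as nothing more than the specialization of Theorem \ref{tva theorem X0} to the value $l=0$, so that essentially all of the substantive work has already been carried out. The single new ingredient I would need to supply is the translation between the $l=0$ case of Definition \ref{definition k-va} and the classical notion of global generation. First I would unwind the definition: a $0$-dimensional subscheme $Z\subset X_{s,(0)}$ with $h^0(Z,\mathcal{O}_Z)=l+1=1$ is exactly a single reduced closed point, say $Z=\{x\}$, and then $H^0(Z,\mathcal{O}_X(D)|_Z)$ is the one-dimensional fiber of $\mathcal{O}_X(D)$ at $x$. Surjectivity of the restriction map $H^0(X_{s,(0)},\mathcal{O}_X(D))\to H^0(Z,\mathcal{O}_X(D)|_Z)$ for every point $x$ is precisely the statement that the global sections generate the stalk at each point, i.e.\ that $\mathcal{O}_X(D)$ is base point free. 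Thus $0$-very ampleness coincides with global generation; this is exactly the content of the remark following Theorem \ref{tva theorem X0}, and I would record it explicitly at the start of the argument.

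With this identification in hand, I would simply substitute $l=0$ into Theorem \ref{tva theorem X0}. On the hypothesis side, the bound \eqref{tva equations X0 large s}, namely $\sum_{i=1}^s m_i-nd\le b-1-l$, becomes $\sum_{i=1}^s m_i-nd\le b-1$, which is exactly the assumption imposed in the corollary (with $b$ as in \eqref{b bound}), while the dichotomy $s\le 2n$ versus $s\ge 2n+1$ is left unchanged. On the conclusion side, the two inequalities \eqref{tva equations X0}, namely $l\le m_i$ and $l\le d-m_i-m_j$, become $0\le m_i$ and $0\le d-m_i-m_j$, which are precisely the conditions \eqref{gg equations X0}. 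Hence $D$ is $0$-very ample, equivalently globally generated, if and only if \eqref{gg equations X0} holds, and the corollary follows.

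I do not expect a genuine obstacle beyond this bookkeeping: the vanishing-theorem machinery and the analysis of restriction maps to length-$(l+1)$ schemes are entirely absorbed into Theorem \ref{tva theorem X0}. The only point where I would be slightly careful is to check that the $l=0$ instance of the \emph{only if} direction is not vacuous, i.e.\ that failure of either inequality in \eqref{gg equations X0} really does produce a base point. This is indeed the case: if some $m_i<0$ then $E_i$ enters the base locus, while if $d-m_i-m_j<0$ then $k_{\{i,j\}}=m_i+m_j-d>0$ forces the line $L_{\{i,j\}}$ into $\Bs(|D|)$ by Lemma \ref{base locus lemma}, so $D$ fails to be globally generated in either situation. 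Confirming this endpoint behavior is the only verification I would add to the direct specialization.
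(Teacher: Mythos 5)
Your proposal is correct and matches the paper's intended argument: the corollary is exactly the $l=0$ specialization of Theorem \ref{tva theorem X0}, with the identification of $0$-very ampleness and global generation recorded in the remark immediately following that theorem, and the base-point check you add for the ``only if'' direction is precisely the $l=0$ case already carried out inside the proof of Theorem \ref{tva theorem X0}. Nothing further is needed.
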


\begin{corollary}[Very ample line bundles]\label{va corollary}
In the same notation of Theorem \ref{tva theorem X0},
assume that either $s\le 2n$ or  $s\ge 2n+1$ and
$$\sum_{i=1}^sm_i -nd \le b_1.$$
Then $D$ is very ample if and only if
\begin{equation}\label{va equations X0}
\begin{split}
&1\leq m_i, \ \forall i\in\{1,\dots,s\},\\ 
& 1\le d-m_i-m_j, \ \forall i,j\in\{1,\dots,s\},\ i\ne j.\\  
\end{split}
\end{equation}
\end{corollary}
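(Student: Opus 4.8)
The plan is to deduce this statement directly from Theorem \ref{tva theorem X0} by specializing to $l=1$, once we record the standard fact that very ampleness and $1$-very ampleness coincide for a line bundle on a smooth projective variety. In other words, the corollary is not proved from scratch but read off from the main theorem, exactly as the preceding remark suggests.

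First I would recall the scheme-theoretic characterization of very ampleness. By Definition \ref{definition k-va}, $\mathcal{O}_X(D)$ is $1$-very ample precisely when the restriction map $H^0(X_{s,(0)},\mathcal{O}_X(D))\to H^0(Z,\mathcal{O}_X(D)|_Z)$ is surjective for every $0$-dimensional subscheme $Z\subset X_{s,(0)}$ of length $2$. Taking $Z$ to consist of two distinct reduced points shows that the linear system separates points, while taking $Z$ supported at a single point with a prescribed tangent direction shows that it separates tangent vectors. These two separation properties together are equivalent to very ampleness of $\mathcal{O}_X(D)$, so $1$-very ample and very ample are the same condition here.

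Next I would check that the hypotheses and conclusion match those of Theorem \ref{tva theorem X0} at $l=1$. The numerical bound in the corollary, $\sum_{i=1}^s m_i - nd \le b-2$, is exactly \eqref{tva equations X0 large s}, namely $\sum_{i=1}^s m_i - nd \le b-1-l$, evaluated at $l=1$, and the regime $s\le 2n$ is left unchanged; similarly, the conclusion \eqref{va equations X0}, i.e. $1\le m_i$ for all $i$ and $1\le d-m_i-m_j$ for all $i\ne j$, is precisely \eqref{tva equations X0} with $l=1$. Applying Theorem \ref{tva theorem X0} with $l=1$ then gives that $D$ is $1$-very ample if and only if \eqref{va equations X0} holds, and by the first step this is equivalent to very ampleness. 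The only substantive input is Theorem \ref{tva theorem X0} itself, which is already established; within this corollary there is no real obstacle, the single point deserving a word of care being the identification of very ampleness with $1$-very ampleness via the separation-of-points-and-tangent-vectors criterion.
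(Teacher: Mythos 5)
Your proposal is correct and matches the paper's treatment: the corollary is simply Theorem \ref{tva theorem X0} specialized to $l=1$, combined with the standard identification of $1$-very ampleness with very ampleness (separation of points and tangent vectors), which the paper records in the remark immediately following the theorem. The bookkeeping you perform — checking that $b-2 = b-1-l$ at $l=1$ and that \eqref{va equations X0} is \eqref{tva equations X0} at $l=1$ — is exactly what the paper leaves implicit.
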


We present now two examples proving that the bound \eqref{tva equations X0 large s} of  Theorem \ref{tva theorem X0} is sharp. 
\begin{example}\label{example0}
Consider the divisor in $\PP^3$ 
$$D:=2H-E_1-\ldots-E_8.$$

In this case $n=3$, $d=2$, $s=8$, $l=0$ and is easy to check that $D$ satisfies equation \eqref{tva equations X0}.
However, since the bound $b_l=1$ the divisor $D$ fails to satisfy inequality \eqref{tva equations X0 large s} so hypothesis of Theorem \ref{tva theorem X0} does not apply to $D$.
In fact, divisor $D$ has a curve of degree $4$ as base locus so it is not globally generated.
\end{example}

\begin{example}\label{example}
Let us consider the anticanonical divisor of the blown-up $\PP^2$ in eight points in general position
$$D:=3H-E_1-\ldots - E_8.$$
In this case $n=2$, $s=8$, $d=3$ and $l=0$ it satisfies  equation \eqref{tva equations X0} but it fails \eqref{tva equations X0 large s} since $b_l=1$.
Sections of $D$ correspond to planar cubics passing through eight simple points. It is well-known that all such cubics meet at a ninth point, therefore $D$ is not a globally generated divisor. However, $D$ is nef. 
\end{example}

\subsection{Some technical lemmas}
In this section we prove a series of technical lemmas that will be useful in the proofs of the main theorem, Theorem \ref{tva theorem X0}, that will be given in Section \ref{main proof section}. These will also justify the integer $b_l$ appearing in  \eqref{b for l-va} and explain why we cannot obtain a better bound than \eqref{b for l-va}.

\begin{lemma}\label{bound} 
Let $D$  be the divisor defined in \eqref{def D}  and assume 
that \eqref{tva equations X0} holds.  Then $s_D(d)=0$ unless $l=0$, $s=1$ and $m_1=d$,  in which case $s_D(d)=1$.
\end{lemma}
\begin{proof}
Assume $s_D(d)\neq 0$; in particular $m_1=d$. Equations \eqref{tva equations X0} imply
$d\geq m_1+m_i+l=d+m_i+l$,  that gives $m_i=0$ for all $i\geq 1$ and $l=0$. In this case $D=dH-dE_1$ and $s_D(d)=1$. 
\end{proof}

Let $F$ be the divisor obtaining by subtracting a sum of $l+1$ exceptional divisors $E_i$, with repetitions allowed, from $D$, 
namely 
\begin{equation}\label{divisor F}
F:=D-\sum_{i=1}^s\epsilon_i E_i, \quad \epsilon:=\sum_{i=1}^s\epsilon_i=l+1,
\end{equation}
where the $\epsilon_i$'s are positive integers.

\begin{lemma}\label{bound for F} 
Let $D$ and $F$ be divisors defined respectively as  in \eqref{def D} and  \eqref{divisor F} and assume that $D$ satisfies 
\eqref{tva equations X0 large s} and
 \eqref{tva equations X0}, then $F$ satisfies \eqref{cond vanishings}.
\end{lemma}

\begin{proof}

We first proof the following claim:  $s_F(d)=0$ unless  $m_2=\ldots=m_s=1$, $m_1=d-l-1$ and $\epsilon=\epsilon_1=l+1$, in which case $s_F(d)=1$.

In order to prove the claim, assume first that $\epsilon_i\le l$ for all $i$ (in particular $l\ge 1$).  Then  \eqref{tva equations X0}   implies $m_i\ge 1$ and $d-m_i\geq m_j+l $ for every $i,j$. Therefore $s_F(d)=0$.
Otherwise, after permuting the indices if necessary, assume  that $\epsilon_1=l+1$. Then \eqref{tva equations X0}  gives $d-(m_1+l+1)\geq m_i-1\geq 0$.
Notice that $s_F(d)=0$, unless  $m_1+l+1=d$. In the latter case we have  $m_i=1$, for all $i\geq 2$, therefore we obtain
$D=dH-(d-l-1)E_1-E_2-\ldots -E_s$ and 
$F=dH-dE_1-E_2-\ldots -E_s$.

Finally, using the claim  we compute the integer $b(F)$ defined in  \eqref{b bound} for the divisor $F$: we obtain  $b(F)=\min\{n,s-n-2\}$,
unless divisor $D$ has $m_1=d-l-1$ and $m_2=1$ and in this case it is
$b(F)=\min \{n-1, s-n-2\}.$ It is now a straightforward computation to prove that  the statement holds. 
\end{proof}

Let us now introduce the following divisors $G_i$, $i=0,1$, where $\bar{s}_i=\min\{s,n-i\}$, by subtracting $l+1$ times from $D$ the strict transform on the blow-up of a hyperplane of $\PP^n$ containing $p_1,\dots, p_{\bar{s}}$:
\begin{equation}\label{divisor Gi}
G_j:=(d-l-1)H-\sum_{i=1}^{\bar{s}_j}(m_i-l-1)E_i-\sum_{i=\bar{s}_j+1}^sm_i E_i.
\end{equation}

\begin{lemma}\label{bound for Gi} 
Let $D$ and $G_j$ be divisors defined respectively as  in \eqref{def D} and  \eqref{divisor Gi} and assume that $D$ satisfies 
\eqref{tva equations X0 large s} and
 \eqref{tva equations X0}.Then $G_j$ satisfies \eqref{cond vanishings}, for $j=0,1$.
\end{lemma}
\begin{proof}
If $s_D(d)>0$, then by Lemma \ref{bound} we have that $D=d(H-E_1)$. Therefore $\bar{s}_0=\bar{s}_1=s=1$, $G_j=(d-l-1)(H-E_i)$, $s_{G_j}(d-l-1)=1$ and $G_j$ obviously satisfies \eqref{cond vanishings}, for  $j=0,1$.
Therefore we can assume that $s_D(d)=0$.

\vskip.3cm

 If $m_i<d-l-1$ for all $i$'s, then obviously $s_{G_j}(d-l-1)=0$. 
Let us write 
$$G_j=d'H-\sum_{i=1}^dm'_iE_i:=(d-l-1)H-\sum_{i=1}^{\bar{s}_j}(m_i-l-1)E_i-\sum_{i=\bar{s}_j+1}^s m_i E_i.
$$
First of all take $j=0$. 
It is an easy computation to verify that $G_0$ satisfies \eqref{cond vanishings}.
 Indeed, if $\bar{s}=s<n$ we have 
$$\sum_{i=1}^{\bar{s}}m'_i-nd'=\sum_{i=1}^{\bar{s}}(m_i-l-1)-n(d-l-1)\le0,$$ 
because $m_i\le d$. Otherwise, if $\bar{s}=n\le s$, we compute
$$
\sum_{i=1}^s m'_i-nd'=\sum_{i=1}^{\bar{s}}(m_i-l-1)+\sum_{i=\bar{s}+1}^s m_i-n(d-l-1)=
\sum_{i=1}^sm_i-nd.$$ 
The above number is bounded above by $0$ whenever $s\le 2n$, and by $\min\{n,s-n-2\}-l-1$ whenever $s\ge 2n+1$, by the hypotheses.
Moreover, in all cases one has $m'_i+m'_j-d'\le1$, for all $i\neq j$. 

Now,   take $j=1$. 
We verify that $G_1$ satisfies \eqref{cond vanishings} with a similar computation.
Indeed, if
 $\bar{s}=s<n-1$ then it is the same computation as before. While if  $\bar{s}=n-1\le s$ we have
 $$
\sum_{i=1}^sm'_i-nd'= \sum_{i=1}^{\bar{s}}(m_i-l-1)+\sum_{i=\bar{s}+1}^sm_i-n(d-l-1)=\sum_{i=1}^sm_i-nd+l+1.
 $$
The number on the right hand side of the above expression is bounded above by $0$ is $s\le 2n$ and by and by $\min\{n,s-n-2\}-l-1$ if $s\ge 2n+1$. 

\vskip.3cm

Finally, assume that $m_i=d-l-1$ for some $i$ and assume, without loss of generality, that $i=1$. In this case we have $m_i=1$ for all $i>0$, see the proof of Lemma \ref{bound for F}, and  $s_{G_j}(d-l-1)=1$. In is easy to verify that $G_j$ satisfies  \eqref{cond vanishings}.
 \end{proof}

\subsection{Proof of Theorem \ref{tva theorem X0}}\label{main proof section}

In order to give a proof of Theorem \ref{tva theorem X0}, we first give the following vanishing theorem, that has its own intrinsic interest. 

Let $\II_{\{q^{l+1}\}}$ denote the ideal sheaf of the fat point of multiplicity $l+1$ supported at $q\in \PP^n$.

\begin{theorem}\label{toric tensored} 
In the same notation as Theorem \ref{tva theorem X0},
fix integers $d,m_1,\dots,m_s,l\ge0$, $s\ge1$.
Assume that either $s\le 2n$ or that $s\ge 2n+1$ and  that
 \eqref{tva equations X0 large s} is satisfied.
Moreover, assume that 
\begin{equation}\label{inequalities}
\begin{split}
&l\leq m_i, \ \forall i\in\{1,\dots,s\},\\   
&l \le d-m_i-m_j, \ \forall i,j\in\{1,\dots,s\},\ i\ne j. 
\end{split}
\end{equation}
Then $h^1(D\otimes \II_{\{q^{l+1}\}})=0$ for any $q\in X_{s}$. 
\end{theorem}

\begin{proof}
Case (1). 
Assume first of all that  $q\in E_i$, for some $i\in\{1,\dots,s\}$. 
We claim that 
\begin{equation}\label{transferring h^1}
h^1(D\otimes \II_{\{q^{l+1}\}})\le h^1(D-(l+1)E_i).
\end{equation} Hence
we conclude because the latter vanishes, by  Theorem \ref{vanishing theorems} and Lemma \ref{bound for F} with $F=D-(l+1)E_i$. 
We now prove that \eqref{transferring h^1} holds. Let $\pi$ be the blow-up of $X_{s}$ at $q\in E_i$ and let $E_q$ be the exceptional divisor created. 
By the \emph{projection formula} we have $H^i(D\otimes\II_{\{q^{l+1}\}})\cong H^i(\pi^*(D)-(l+1)E_q)$.
For $l=0$, consider the  exact sequence 
\begin{equation}\label{seq l=0}
0\to \pi^*(D)-\pi^*(E_i)\to\pi^*(D)-E_q\to (\pi^*(D)-E_q)|_{\pi^*(E_i)-E_q}\to0.
\end{equation}
 Notice that $\pi^*(E_i)-E_q$ is the blow-up of $E_i\cong\PP^{n-1}$ at the point $q$:
 denote by $h,e_q$ the generators of its Picard group.
We have $(\pi^*(D)-E_q)|_{\pi^*(E_i)-E_q}\cong m_ih-e_q$,
 in particular it has vanishing first cohomology group. 
Hence, looking at the long exact sequence in cohomologies associated with \eqref{seq l=0}, 
one gets that the map $$H^1(\pi^*(D)-\pi^*(E_i))\to H^1(\pi^*(D)-E_q)$$ is surjective, therefore $h^1(\pi^*(D)-\pi^*(E_i))\ge h^1(\pi^*(D)-E_q)$. Finally, 
by the projection formula one has $H^i(\pi^*(D)-\pi^*(E_i))=H^i(D-E_i)$, so we conclude. For $l\ge 1$, one can iterate $l$ times the above argument and conclude.

\vskip.3cm 

Case (2). Assume $q\in X_{s}\setminus\{E_1,\dots,E_s\}$. Hence  $q$  is the pull-back of a point $q'\in \PP^n\setminus\{p_1,\dots,p_s\}$.

We will prove the statement by induction on $n$. The case $n=1$ is obvious. Indeed, any such $D\otimes\II_{\{q^{l+1}\}}$ corresponds to a linear series on the projective line given by three points whose sum of the multiplicities is bounded above as follows $m_1+m_2+(l+1)\le d+1$. Hence the first cohomology group vanishes. From now on we will assume $n\ge2$.

\vskip.3cm 

Recall that a set of  points $\mathcal{S}$ of $\PP^n$ is said to be in 
\textit{linearly general position} if for each integer $r$ we have $\sharp (S\cap L) \le r+1$, for all $r$-dimensional linear subspaces $L$ in $\PP^n$. 

\vskip.3cm

Case (2.a).
Assume first that the points in $\mathcal{S}\cup\{ q'\}$ are not in linearly general position in $\PP^n$.
If $s\ge n$, $q'$ lies on a hyperplane $H$ of $\PP^n$ spanned by $n$ points of $\mathcal{S}$. Reordering the points if necessary, assume that $q'\in H:=\langle p_1,\dots,p_n\rangle$.
If $s< n$, let $H$ be any hyperplane containing $\mathcal{S}\cup\{q'\}$.
Let $\bar{H}$ denote the pull-back of $H$ on $X_{s}$. Notice that 
$\bar{H}$ is isomorphic to the space $\PP^{n-1}$ blown-up at $\bar{s}:=\min\{s,n\}$ distinct points in general position,  so that we can write that 
$\bar{H}\cong X^{n-1}_{\bar{s}}$. Its Picard group is generated by $h:=H|_H$, $e_i:=E_i|_{H}$. 
As a divisor, we have $\bar{H}=H-\sum_{i=1}^{\bar{s}} E_i.$
Consider the restriction exact sequence of line bundles
\begin{equation}\label{restriction to hyperplane}
0\to (D-\bar{H})\otimes\II_{\{q^{l}\}}\to D\otimes\II_{\{q^{l+1}\}} \to (D\otimes\II_{\{q^{l+1}\}})|_{\bar{H}}\to0.
\end{equation}
We iterate this restriction procedure $l+1$ times.
The restriction of the $(\lambda+1)$st exact sequence, $0\le \lambda\le l$,
is the complete linear series on $X^{n-1}_{\bar{s}}$ given by 
\begin{equation}\label{restriction lambda}
\left((d-\lambda)h-\sum_{i=1}^{\bar{s}}(m_i-\lambda) e_i\right)\otimes\II_{\{q^{l+1-\lambda}\}|_H}.
\end{equation}
We leave it to the reader to verify that it satisfies the hypotheses of the theorem, for every $0\le\lambda\le l$. Hence we conclude, by induction on $n$, that the first cohomology group vanishes.

Moreover, the (last) kernel is the line bundle associated with the following divisor:
\begin{equation}\label{last kernel}
d'H-\sum_{i=1}^dm'_iE_i:=(d-l-1)H-\sum_{i=1}^{\bar{s}}(m_i-l-1)E_i-\sum_{i=\bar{s}+1}^s m_i E_i,
\end{equation}
that equals the divisor $G_0$ introduced in \eqref{divisor Gi} for $i=0$. It satisfies the condition of Theorem \ref{vanishing theorems} by Lemma 
\ref{bound for Gi}, hence we conclude in this case. 

\vskip.3cm 

Case (2.b). 
Lastly, assume that $\mathcal{S}\cup\{ q'\}$ is in linearly general position in $\PP^n$.
If $s\ge n-1$, let $H$ denote the hyperplane $\langle p_1,\dots,p_{n-1},q'\rangle$.  If $s< n-1$, let $H$ be any hyperplane containing $\mathcal{S}\cup\{q'\}$. In both cases such an $H$ exists by the assumption that points of
 $\mathcal{S}$ are in general position.
As in the previous case, let $\bar{H}$ denote the pull-back of $H$ on $X_{s}$.
 It is isomorphic to the space $\PP^{n-1}$ blown-up at $\bar{s}:=\min\{s,n-1\}$ distinct points in general position, 
that we  may denote by $\bar{H}\cong X^{n-1}_{\bar{s}}$. 

We iterate the same restriction procedure shown in \eqref{restriction to hyperplane} $l+1$ times as before.
As before the restriction of the $(\lambda+1)$st exact sequence, that is of the form \eqref{restriction lambda} with $\bar{s}$ differently defined here, verifies the hypotheses of the theorem,  so  it has vanishing first cohomology group by induction on $n$.

Furthermore, the (last) kernel, that is in the shape \eqref{last kernel}, with $\bar{s}, d', m'_i$ as defined here is the divisor $G_1$ defined in 
\eqref{divisor Gi} with $i=1$.  It satisfies the condition of Theorem \ref{vanishing theorems} by Lemma 
\ref{bound for Gi}.  
 Indeed, if
 $\bar{s}=s<n-1$ then it is the same computation as before. While if  $\bar{s}=n-1\le s$ we have
 $$
\sum_{i=1}^sm'_i-nd'= \sum_{i=1}^{\bar{s}}(m_i-l-1)+\sum_{i=\bar{s}+1}^sm_i-n(d-l-1)=\sum_{i=1}^sm_i-nd+l+1.
 $$
The number on the right hand side of the above expression is bounded above by $0$ is $s\le 2n$ and by $b$ if $s\ge 2n+1$. This concludes the proof. 

\end{proof}

Let $\ls=\ls_{n,d}(m_1,\dots,m_s)$ be the linear system of the form \eqref{linear system}.

\begin{corollary}\label{cor}
Assume that $\ls=\ls_{n,d}(m_1,\dots,m_s)$ satisfies the conditions of  Theorem \ref{toric tensored}. Then the linear system of elements of $\ls$ that vanish with multiplicity $l+1$ at an arbitrary extra point, $\ls_{n,d}(m_1,\dots,m_s,l+1)$, is non-special.

\end{corollary}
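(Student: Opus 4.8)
The plan is to identify the first cohomology of the linear system $\ls_{n,d}(m_1,\dots,m_s,l+1)$ with the group $h^1(D\otimes\II_{\{q^{l+1}\}})$ controlled by Theorem \ref{toric tensored}, and then simply quote that theorem. First I would recall the dictionary between linear systems and twisted sheaves on the blow-up: imposing a point of multiplicity $l+1$ at an extra point $q'\in\PP^n$ amounts to tensoring the line bundle $D=dH-\sum_{i=1}^sm_iE_i$ of \eqref{def D} by the ideal sheaf $\II_{\{q^{l+1}\}}$ of the $(l+1)$-fold point supported at the point $q\in X_{s,(0)}$ lying over $q'$. The blow-up $X_{s,(0)}\to\PP^n$ resolves the fat points at $p_1,\dots,p_s$ but leaves $q'$ as an honest $(l+1)$-fold point, so pushing forward identifies $H^i(X_{s,(0)},D\otimes\II_{\{q^{l+1}\}})$ with the cohomology of $\II_{Z'}(d)$ on $\PP^n$, where $Z'=\sum_i m_ip_i+(l+1)q'$. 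In particular the sections of $\ls_{n,d}(m_1,\dots,m_s,l+1)$ are exactly $H^0(X_{s,(0)},D\otimes\II_{\{q^{l+1}\}})$, and its first cohomology is $h^1(D\otimes\II_{\{q^{l+1}\}})$.

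Next, following the convention used throughout this paper (cf. the discussion after Theorem \ref{extended vanishing}, where a divisor is called \emph{special} precisely when its first cohomology group does not vanish), non-speciality of the system $\ls_{n,d}(m_1,\dots,m_s,l+1)$ is by definition the vanishing $h^1(D\otimes\II_{\{q^{l+1}\}})=0$. The hypothesis of the corollary is, verbatim, the hypothesis of Theorem \ref{toric tensored}, which asserts exactly this vanishing for every $q\in X_{s,(0)}$. Applying it to the point lying over the arbitrary extra point $q'$ then finishes the argument.

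Since Theorem \ref{toric tensored} is already established, the corollary is essentially a reformulation of it and presents no serious obstacle; the proof is a translation. The one point worth stressing, and where the real content sits, is that one cannot obtain the conclusion by naively applying the basic vanishing statement, Theorem \ref{vanishing theorems}, to the enlarged configuration $\{p_1,\dots,p_s,q'\}$ of $s+1$ points: the extra point $q'$ is \emph{arbitrary} rather than in general position, so that theorem does not apply to it directly, and moreover the numerical bounds would change. It is precisely Theorem \ref{toric tensored}, which controls the vanishing for a single fat point at an \emph{arbitrary} $q$, that makes the statement go through, and its clause ``for any $q\in X_{s,(0)}$'' covers both the case of a genuine point of $\PP^n\setminus\{p_1,\dots,p_s\}$ (Case (2) of its proof) and that of an infinitely near point lying on some $E_i$ (Case (1)).
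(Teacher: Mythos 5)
Your proposal is correct and follows essentially the same route as the paper: the paper's proof is a one-line application of the projection formula identifying $H^i(X_{s,(0)}, D\otimes\II_{\{q^{l+1}\}})$ with $H^i(\PP^n,\ls_{n,d}(m_1,\dots,m_s,l+1))$, followed by the vanishing from Theorem \ref{toric tensored}, exactly as you describe. Your additional remarks on why Theorem \ref{vanishing theorems} alone would not suffice (the extra point being arbitrary rather than general) are accurate but not needed for the argument.
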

\begin{proof}
The projection formula implies that, for all $i\ge 0$,  $H^i(X_{s}, D\otimes\II_{\{q^{l+1}\}})\cong H^i(\PP^n,\ls_{n,d}(m_1,\dots,m_s,l+1))$. Therefore $\ls_{n,d}(m_1,\dots,m_s,l+1)$ has the expected dimension.
\end{proof}

Before we proceed with the proof of the main result, Theorem \ref{tva theorem X0}, we need the following lemmas.

\begin{lemma}\label{flat degeneration}
Let $X$ be a complex projective smooth variety and $\mathcal{O}_X(D)$ a line bundle. Let $Z$ be a $0$-dimensional subscheme of $X$ and let $Z_0$ be a flat degeneration of $Z$. 
Then $h^1(\mathcal{O}_X(D)\otimes\II_Z)\le h^1(\mathcal{O}_X(D)\otimes\II_{Z_0})$.
\end{lemma}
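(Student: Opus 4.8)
The plan is to realize the flat degeneration as an honest flat family over a base and then invoke the semicontinuity theorem for cohomology. Concretely, a flat degeneration of $Z$ to $Z_0$ amounts to a subscheme $\mathcal{Z}\subset X\times T$, flat over a smooth pointed curve (or disc) $(T,0)$, whose fibre over $0$ is $Z_0$ and whose fibre over every other $t$ is isomorphic to $Z$. First I would pull back $\mathcal{O}_X(D)$ to $X\times T$ and consider the coherent sheaf $\mathcal{F}:=\mathcal{O}(D)\otimes\II_{\mathcal{Z}}$ on $X\times T$. The key point to establish is that $\mathcal{F}$ is flat over $T$ and that its restriction to the fibre $X\times\{t\}$ recovers $\mathcal{O}_X(D)\otimes\II_{Z_t}$, so that the cohomology of the fibres of $\mathcal{F}$ is precisely the quantity we want to compare.

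For the flatness, I would start from the short exact sequence
\[
0\to \II_{\mathcal{Z}}\to \mathcal{O}_{X\times T}\to \mathcal{O}_{\mathcal{Z}}\to 0,
\]
in which $\mathcal{O}_{X\times T}$ is $T$-flat and $\mathcal{O}_{\mathcal{Z}}$ is $T$-flat by hypothesis on the family; the long exact $\Tor$-sequence over $\mathcal{O}_T$ then forces $\II_{\mathcal{Z}}$ to be $T$-flat as well, and shows that $\II_{\mathcal{Z}}$ restricts fibrewise to $\II_{Z_t}$ (here the $0$-dimensionality of $Z$, hence the constancy of $\ell(Z_t)=\ell(Z_0)$, is what guarantees there is no jumping in the structure-sheaf term). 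Tensoring with the line bundle $\mathcal{O}(D)$ pulled back from $X$ preserves $T$-flatness, so $\mathcal{F}$ is $T$-flat with $\mathcal{F}_t\cong \mathcal{O}_X(D)\otimes\II_{Z_t}$.

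With this in hand the conclusion is immediate from Grauert--Grothendieck semicontinuity: the function $t\mapsto h^1(X,\mathcal{F}_t)=h^1\bigl(\mathcal{O}_X(D)\otimes\II_{Z_t}\bigr)$ is upper semicontinuous on $T$, so its value at the special point $0$ dominates its value at a general point, giving exactly $h^1(\mathcal{O}_X(D)\otimes\II_Z)\le h^1(\mathcal{O}_X(D)\otimes\II_{Z_0})$. The same inequality can be seen more concretely through the restriction maps: from the sequence $0\to \mathcal{O}_X(D)\otimes\II_{Z_t}\to \mathcal{O}_X(D)\to \mathcal{O}_{Z_t}\otimes\mathcal{O}_X(D)\to 0$ one reads off $h^1(\mathcal{O}_X(D)\otimes\II_{Z_t})=h^1(\mathcal{O}_X(D))+\ell(Z)-\rank(\rho_t)$, where $\rho_t\colon H^0(\mathcal{O}_X(D))\to H^0(\mathcal{O}_{Z_t}\otimes\mathcal{O}_X(D))$ is the evaluation map; since the $\rho_t$ assemble into a morphism of vector bundles over $T$ (the target being the locally free $p_*(\mathcal{F}\,'\,)$ of rank $\ell(Z)$), and the rank of such a morphism is lower semicontinuous, $\rank(\rho_0)\le \rank(\rho_t)$, yielding the claim.

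The main obstacle is the bookkeeping of the second step, namely verifying cleanly that $\II_{\mathcal{Z}}$ (and then $\mathcal{F}$) is flat over $T$ and that restriction to fibres commutes with forming the ideal sheaf; once the family is set up so that this holds, the inequality is a formal consequence of semicontinuity and carries no further difficulty. I would therefore spend the care on fixing the precise meaning of \emph{flat degeneration} as a $T$-flat family $\mathcal{Z}$ and on the $\Tor$-argument, and treat the semicontinuity step as standard.
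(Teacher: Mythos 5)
Your proposal is correct and is essentially the paper's own argument: the paper proves this lemma in one line by citing upper semicontinuity of cohomology in flat families (Hartshorne, Sect.\ III.12), which is exactly the semicontinuity step you invoke. The extra care you take in setting up the $T$-flat family $\mathcal{Z}$, checking via the $\Tor$-sequence that $\II_{\mathcal{Z}}$ is $T$-flat and restricts fibrewise to $\II_{Z_t}$, and then tensoring with the pulled-back line bundle, is the standard bookkeeping the paper leaves implicit.
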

\begin{proof}
It follows from the property of \emph{upper semicontinuity} of cohomologies, see e.g, \cite[Sect. III.12]{hartshorne}. 
\end{proof}

\begin{lemma}\label{inclusion}
In the same notation of Lemma \ref{flat degeneration}, let $Z_1\subseteq Z_2$ be an inclusion of $0$-dimensional schemes.
Then $h^1(\mathcal{O}_X(D)\otimes\II_{Z_1})\le h^1(\mathcal{O}_X(D)\otimes\II_{Z_2})$.
\end{lemma}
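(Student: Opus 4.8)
The plan is to reduce everything to a single short exact sequence of ideal sheaves and exploit the fact that the relevant quotient is supported in dimension zero. First I would reinterpret the hypothesis: an inclusion of subschemes $Z_1\subseteq Z_2$ is the same as a reverse inclusion of ideal sheaves $\II_{Z_2}\subseteq\II_{Z_1}$ inside $\OO_X$, with quotient $\mathcal{Q}:=\II_{Z_1}/\II_{Z_2}$. Since $Z_2$ is $0$-dimensional, the sheaf $\mathcal{Q}$ is coherent with support contained in the (finite) support of $Z_2$, hence has zero-dimensional support.

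Next I would tensor the short exact sequence
$$
0\to \II_{Z_2}\to \II_{Z_1}\to \mathcal{Q}\to 0
$$
with the line bundle $\mathcal{O}_X(D)$. Because $\mathcal{O}_X(D)$ is locally free, hence flat, tensoring preserves exactness, giving
$$
0\to \mathcal{O}_X(D)\otimes\II_{Z_2}\to \mathcal{O}_X(D)\otimes\II_{Z_1}\to \mathcal{O}_X(D)\otimes\mathcal{Q}\to 0.
$$
The key observation is that $\mathcal{O}_X(D)\otimes\mathcal{Q}$ still has $0$-dimensional support, so $h^i(\mathcal{O}_X(D)\otimes\mathcal{Q})=0$ for all $i\ge1$; in particular $h^1(\mathcal{O}_X(D)\otimes\mathcal{Q})=0$.

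Finally I would pass to the long exact sequence in cohomology. The relevant segment reads
$$
H^1(\mathcal{O}_X(D)\otimes\II_{Z_2})\to H^1(\mathcal{O}_X(D)\otimes\II_{Z_1})\to H^1(\mathcal{O}_X(D)\otimes\mathcal{Q}),
$$
and since the rightmost term vanishes, the first map is surjective. Surjectivity yields the dimension inequality
$$
h^1(\mathcal{O}_X(D)\otimes\II_{Z_1})\le h^1(\mathcal{O}_X(D)\otimes\II_{Z_2}),
$$
which is the claim. There is no genuine obstacle here; the only point requiring care is verifying that $\mathcal{Q}$ is supported in dimension zero so that its $H^1$ vanishes, which is immediate from the finiteness of $Z_2$. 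This mirrors the upper-semicontinuity argument used in Lemma \ref{flat degeneration}, the difference being that here the comparison is exact rather than a specialization.
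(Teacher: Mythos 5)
Your proof is correct and follows essentially the same route as the paper: both arguments use the short exact sequence $0\to\mathcal{O}_X(D)\otimes\II_{Z_2}\to\mathcal{O}_X(D)\otimes\II_{Z_1}\to\mathcal{Q}\to0$ (the paper writes the quotient as $\mathcal{O}_X(D)\otimes\II_{Z_1}|_{Z_2\setminus Z_1}$), observe that the quotient is supported in dimension zero so its $H^1$ vanishes, and conclude surjectivity of $H^1(\mathcal{O}_X(D)\otimes\II_{Z_2})\to H^1(\mathcal{O}_X(D)\otimes\II_{Z_1})$ from the long exact sequence. Your version is if anything slightly more explicit about why the quotient has vanishing higher cohomology.
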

\begin{proof}
If $Z_1=Z_2$ then equality obviously holds. We will assume $Z_1\subsetneq Z_2$.
Consider the following short exact sequence
$$0\stackrel{}{\rightarrow}\mathcal{O}_X(D)\otimes\II_{Z_2}\stackrel{}{\rightarrow}\mathcal{O}_X(D)\otimes\II_{Z_1}\stackrel{}{\rightarrow}\mathcal{O}_X(D)\otimes\II_{Z_1}|_{Z_2\setminus Z_1}\stackrel{}{\rightarrow}0.$$
Consider the associated long exact sequence in cohomology. Since 
$h^1(\mathcal{O}_X(D)\otimes\II_{Z_1}|_{Z_2\setminus Z_1})=0$, the map $H^1(\mathcal{O}_X(D)\otimes\II_{Z_2})\to H^1(\mathcal{O}_X(D)\otimes\II_{Z_1})$ is surjective and this concludes the proof.
\end{proof}

Lemma \ref{flat degeneration} and Lemma \ref{inclusion} will allow to reduce the proof of $l$-very ampleness for divisors $D$ to the computation of vanishing theorems of the first cohomology group of the sheaf associated with $D$ tensored by the ideal sheaf of a collection of fat points, whose multiplicities sum up to $l+1$.

\begin{proof}[Proof of Theorem \ref{tva theorem X0}]
We first prove that  \eqref{tva equations X0} is sufficient condition for $D$ to be $l$-very ample. For every $0$-dimensional scheme $Z\subset X_{s}$ of weight 
 $l+1$, consider the exact sequence of sheaves
\begin{equation}\label{sequence for lva}
0\to \mathcal{O}_{X_{s}}(D)\otimes\II_Z\to \mathcal{O}_{X_{s}}(D)\to \mathcal{O}_{X_{s}}(D)|_Z\to0.
\end{equation}
We will prove that $h^1(X_{s},\mathcal{O}_{X_{s}}(D)\otimes\II_Z)=0$. 
This will imply the surjectivity of the map 
$H^0(X_{s},\mathcal{O}_{X_{s}}(D))\to H^0(Z, \mathcal{O}_{X_{s}}(D)|_Z)$, by taking the
long exact sequence in cohomology associated with \eqref{sequence for lva}.

Let $Z_0$ denote a flat degeneration of $Z$ with support at the union of points $q_1,\dots,q_s,q_{s+1}\in  X_{s}$, with  $q_i\in E_i$, for all $i=1,\dots,s$, and $q_{s+1}\in X_{s}\setminus\{E_1,\dots,E_s\}$. 
Since every exceptional divisor $E_i$ as well as $X_{s}\setminus\{E_1,\dots,E_s\}$ are homogeneous spaces, in order to prove that $h^1(D\otimes\II_Z)=0$ for all $Z\subset X_{s}$ $0$-dimensional schemes of weight $l+1$, by Lemma \ref{flat degeneration} 
 it is enough to prove that the same statement holds for every such $Z_0$.

Set  $\mu_i$ to be the weight of the irreducible component of $Z_0$ supported at $q_i$, 
for all $i=1,\dots,s,{s+1}$. One has that $\mu_i\ge0$, $\sum_{i=1}^{s+1}\mu_i=l+1$.
In order to prove that $h^1(D\otimes\II_{Z_0})=0$, by Lemma \ref{inclusion}, it suffices to prove the a priori stronger statement that 
 $h^1(\mathcal{O}_X(D)\otimes\II_{\bar{Z}_0})=0$, for every $\bar{Z}_0$ collection of fat points $\{q_1^{\mu_1},\dots, q_s^{\mu_s},q_{s+1}^{\mu_{s+1}}\}$. Indeed, the following containment of schemes supported at $q_i$ holds for all $i=1,\dots,s,{s+1}$: $Z_i|_{q_i}\subseteq \{q_i^{\mu_i}\}$.

Assume first of all that $\bar{Z}_0$ has support in one point $q=q_i$, namely $\mu_i=l+1$ for some $i\in\{1,\dots,s,{s+1}\}$ and $m_j=0$ for all $j\neq i$. We have $h^1(D\otimes\II_{\bar{Z}_0})=0$ by Theorem \ref{toric tensored}.
In words, we are completing the scheme $Z_0$ to a scheme $\bar{Z}_0$ that consists of a disjoint union of fat points and proving that the statement holds for the latter.

Assume now that  $\bar{Z}_0$ is supported in several points $q_1,\dots,q_s,q_{s+1}$. 
We want to prove that  $h^i(X_{s},D\otimes\II_{\bar{Z}_0})\le h^i(X_{s},\left(D-\sum_{i=1}^s \mu_i E_i\right)\otimes\II_{\{q_{s+1}^{\mu_{s+1}}\}})=0$. The first inequality follows from \eqref{transferring h^1}.  The  equality follows from Theorem \ref{toric tensored}; 
we leave it to the reader to verify that the conditions are indeed satisfied. 

\vskip.3cm

We now prove that \eqref{tva equations X0} is necessary condition for $D$ to be $l$-very  
ample, by induction on $l$. 

Let us first assume $l=0$, namely that $D$ is base point free. 
If $m_i<0$ then $m_iE_i$ would be contained in the base locus of $D$. 
If $m_i+m_j> d$ for some $i\ne j$, then the strict transform of the line $\langle p_i,p_j\rangle\subset\PP^n$ 
would be contained in the base locus of $D$. 
In both cases we would obtain a contradiction.

Assume that $l=1$, namely that $D$ is very ample. If $m_i\le0$ (or $0\le d-m_i-m_j$ for some $i\ne j$), 
then $E_i$ (resp. the strict transform of the line through $p_i$ and $p_j$) would be contracted by $D$, a contradiction.

More generally, assume that $D$ is $l$-very ample and $l\ge2$.  
Then conditions \eqref{tva equations X0} are satisfied. Indeed, if $m_i\le l-1$ for some $i$, 
we can find a $0$-dimensional scheme, $Z$, of weight $l+1$ such that
 $h^1(D\otimes\II_Z)>0$. Let $Z\subset E_i$ be an $l$-\emph{jet scheme} centred at $q\in E_i$ (see \cite{mustata}).
Consider the restriction $D\otimes\II_Z|_{E_i}\cong m_1h\otimes\II_Z$, where $h$
 is the hyperplane class of $E_i\cong\PP^{n-1}$. We have $h^1(E_i,D\otimes\II_Z|_{E_i})\ge1$, hence
$h^1(X_{s},D\otimes\II_Z)\ge1$. To see this, let $x_1\dots,x_{n-1}$ be
affine coordinates for an affine chart $U\subset E_i$ and let  $Z$ be the jet-scheme 
with support 
$q=(0,\dots,0)\in U$
given by the tangent directions up to order $l$ along $x_1$. 
The space of global sections of $D\otimes\II_Z|_{E_i}$
is isomorphic to the set of degree-$m_i$ polynomials $f(x_1,\dots, x_{n-1})$,
 whose partial derivatives $\partial^\lambda f/\partial x_1^{\lambda}$
 vanish at $q$, for $0\le \lambda\le l$. On the other hand,
$H^1(E_i,D\otimes\II_Z|_{E_i})$
is the ``space of linear dependencies'' among the $l+1$ conditions imposed by the vanishing of
 the partial derivatives 
to the coefficients of $f$. Since $m_i\leq l-1$ then $f$ is a polynomial of degree at most  $l-1$, therefore $\partial^{l} f/\partial x_1^{l}\equiv0$ for every such a polynomial, and we conclude. 

Similarly, if $d-m_i-m_j\le l-1$ for some $i,j,i\ne j$, then one finds a jet-scheme $Z$ contained in the pull-back of the line through $p_i$ and $p_j$, $L$, for which $h^1(X_{s},D\otimes\II_Z)\ge1$. Indeed, if $Z$ is such a scheme, then the restriction is
 $D\otimes\II_Z|_L\cong (d-m_i-m_j)h\otimes\II_Z|_{L}$, where in this case $h$ is  the class of a point in $L$, and $Z|_L$ is a fat point of multiplicity $l$ in $L$. One concludes by the Riemann-Roch Theorem that $h^1(L,D\otimes\II_Z|_L)\ge1$ because $\chi(L,D\otimes\II_Z|_L)=(d-m_i-m_j)-l\le-1$ and $h^0(L,D\otimes\II_Z|_L)=0$.

\end{proof}

\subsection{$l$-jet ampleness}
In \cite{bfs}, Beltrametti, Francia and Sommese introduced  notions of 
higher order embeddings, one of these
being $l$-very ampleness (Definition \ref{definition k-va}) with the aim of studying the \emph{adjoint bundle} on surfaces.

\begin{definition}
In the same notation as Definition \ref{definition k-va},
 if for any fat point $Z=\{q^{l+1}\}$, $q\in X$,  the natural restriction map to $Z$,
 $H^0(X,\mathcal{O}_X(D))\to H^0(Z, \mathcal{O}_X(D)|_{Z})$, is surjective, then $D$
 is said to be $l$-\emph{jet spanned}.

Moreover, if for any collection of \emph{fat points} 
$Z=\{q_1^{\mu_1},\dots,q_\sigma^{\mu_{\sigma}}\}$ such that $\sum_{i=1}^{\sigma}\mu_i=l+1$,
 the restriction map to $Z$ is surjective, then $D$
 is said to be $l$-\emph{jet ample}.
\end{definition}

\begin{remark}\label{jet spannedness}
Theorem \ref{toric tensored} can be restated in terms of $l$-jet spannedness. Namely
 any divisor $D$ satisfying the hypotheses is $l$-jet spanned.

\end{remark}

\begin{proposition}[{\cite[Proposition 2.2]{belsom}}]\label{jet implies very}
In the above notation, if $D$ is $l$-jet ample, then $D$ is $l$-very ample.
\end{proposition}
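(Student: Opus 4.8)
The plan is to deduce $l$-very ampleness from $l$-jet ampleness by reducing the surjectivity condition, which in Definition \ref{definition k-va} must hold for \emph{every} zero-dimensional subscheme of weight $l+1$, to the a priori weaker condition of jet ampleness, which only concerns collections of fat points whose orders sum to $l+1$. The bridge between the two is the elementary but crucial observation that any short zero-dimensional scheme is contained, as a subscheme, in a collection of fat points of the same total weight. Once this containment is established, surjectivity for the fat-point scheme forces surjectivity for the original scheme.

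Concretely, I would first fix an arbitrary $0$-dimensional $Z\subset X$ with $h^0(Z,\mathcal{O}_Z)=l+1$ and decompose it according to its support, writing $Z=\coprod_{i=1}^{\sigma}Z_i$, where each $Z_i$ is supported at a single point $q_i$ and has length $\ell_i$, so that $\sum_{i=1}^{\sigma}\ell_i=l+1$. The heart of the argument is the local claim that each $Z_i$ is contained in the fat point $\{q_i^{\ell_i}\}$ cut out by $\mathfrak{m}_{q_i}^{\ell_i}$. This is a statement about the Artinian local quotient $B=\mathcal{O}_{X,q_i}/I_{Z_i}$, of length $\ell_i$: writing $\bar{\mathfrak{m}}$ for its maximal ideal, the descending chain $B\supseteq\bar{\mathfrak{m}}\supseteq\bar{\mathfrak{m}}^2\supseteq\cdots$ stabilises, and by Nakayama's lemma it must reach $0$; counting lengths along the strictly decreasing portion of this chain gives $\bar{\mathfrak{m}}^{\ell_i}=0$, i.e. $\mathfrak{m}_{q_i}^{\ell_i}\subseteq I_{Z_i}$. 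Hence $Z_i\subseteq\{q_i^{\ell_i}\}$, and, taking the disjoint union over the distinct support points, $Z\subseteq Z':=\{q_1^{\ell_1},\dots,q_\sigma^{\ell_\sigma}\}$.

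It then remains to assemble the surjections. Since $Z'$ is a collection of fat points with $\sum_i\ell_i=l+1$, the $l$-jet ampleness hypothesis (with $\mu_i=\ell_i$) yields that $H^0(X,\mathcal{O}_X(D))\to H^0(Z',\mathcal{O}_X(D)|_{Z'})$ is surjective. Because $Z\subseteq Z'$ are both zero-dimensional, the surjection of structure sheaves $\mathcal{O}_{Z'}\twoheadrightarrow\mathcal{O}_Z$, twisted by the line bundle $\mathcal{O}_X(D)$ and pushed to global sections over a finite (hence affine) scheme, gives a surjection $H^0(Z',\mathcal{O}_X(D)|_{Z'})\to H^0(Z,\mathcal{O}_X(D)|_Z)$. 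Composing the two surjections shows that $H^0(X,\mathcal{O}_X(D))\to H^0(Z,\mathcal{O}_X(D)|_Z)$ is surjective for the arbitrary $Z$ we started with, which is exactly $l$-very ampleness.

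The only genuinely nontrivial step is the local containment $Z_i\subseteq\{q_i^{\ell_i}\}$, so I expect the main obstacle to be a clean justification of the inclusion $\mathfrak{m}_{q_i}^{\ell_i}\subseteq I_{Z_i}$; everything after that is formal manipulation of restriction maps for zero-dimensional schemes. This step is standard commutative algebra (nilpotency of the maximal ideal of an Artinian local ring, controlled by its length via Nakayama), and I would state it as a short self-contained lemma rather than grind through the filtration bookkeeping in the body of the proof.
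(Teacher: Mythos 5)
Your argument is correct. The paper gives no proof of this proposition — it is quoted verbatim from Beltrametti--Sommese \cite{belsom} — and your argument is exactly the standard one behind that citation: the key observation that a length-$\ell_i$ Artinian local quotient is annihilated by $\mathfrak{m}_{q_i}^{\ell_i}$ (so every $0$-dimensional scheme of weight $l+1$ embeds in a union of fat points of total multiplicity $l+1$) is right, and the two surjections compose as you describe.
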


The converse of Proposition \ref{jet implies very} is  true for the projective space $\PP^n$ and for curves, but not in general. 
In this section we proved that the converse is true for lines bundle $\mathcal{O}_{X_s}(D)$ on $X_{s}$,
 that satisfy the hypotheses of Theorem \ref{tva theorem X0}.

\begin{theorem}\label{jet ampleness}
Assume that $s\le 2n$, or $s\ge 2n+1$ and \eqref{tva equations X0 large s}. Assume that 
 $D$ is a line bundle on $X_{s}$ of the form \eqref{def D}. The following are equivalent:
\begin{enumerate}
\item $D$ satisfies \eqref{tva equations X0};
\item $D$ is  $l$-jet ample;
\item $D$ is $l$-very ample.
\end{enumerate}
\end{theorem}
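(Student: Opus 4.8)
The plan is to establish the cycle of implications $(1)\Rightarrow(2)\Rightarrow(3)\Rightarrow(1)$, exploiting the fact that two of the three links are essentially already in hand. The implication $(2)\Rightarrow(3)$ is free: it is precisely the content of Proposition \ref{jet implies very}, which holds on any smooth projective variety with no hypotheses on $D$. The implication $(3)\Rightarrow(1)$ is also already done, since the proof of Theorem \ref{tva theorem X0} showed that $l$-very ampleness forces the numerical conditions \eqref{tva equations X0}; the necessity argument there (via jet schemes supported on the $E_i$ and on strict transforms of lines) uses only the definition of $l$-very ampleness, so it applies verbatim here. Thus the real work is concentrated in a single implication.

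The hard part will be $(1)\Rightarrow(2)$, namely upgrading from $l$-very ampleness to the a priori stronger notion of $l$-jet ampleness. Here I would reuse the machinery assembled for Theorem \ref{tva theorem X0}. Recall that $l$-jet ampleness requires the restriction map $H^0(X_{s,(0)},\mathcal{O}(D))\to H^0(Z,\mathcal{O}(D)|_Z)$ to be surjective for \emph{every} collection of fat points $Z=\{q_1^{\mu_1},\dots,q_\sigma^{\mu_\sigma}\}$ with $\sum_{i=1}^\sigma \mu_i=l+1$, which by the long exact sequence reduces to showing $h^1(D\otimes\II_Z)=0$ for all such $Z$. But the surjectivity proof for $l$-very ampleness already proceeded by degenerating an arbitrary length-$(l+1)$ scheme to a configuration of fat points $\{q_1^{\mu_1},\dots,q_s^{\mu_s},q_{s+1}^{\mu_{s+1}}\}$ with $q_i\in E_i$ and $q_{s+1}$ off the exceptional divisors, and then verifying the vanishing $h^1(D\otimes\II_{\bar Z_0})=0$ directly via \eqref{transferring h^1} and Theorem \ref{toric tensored}. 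Since fat points are exactly what the definition of $l$-jet ampleness demands, I would observe that the inner vanishing statement proved there already covers \emph{arbitrary} fat-point schemes of total weight $l+1$, after using the homogeneity of each $E_i$ and of $X_{s,(0)}\setminus\{E_1,\dots,E_s\}$ to move the supports into standard position.

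Concretely, the argument runs as follows. Given an arbitrary fat-point scheme $Z$ of weight $l+1$, first translate its support components onto the homogeneous strata using the group actions, so that without loss of generality each support point lies either on some $E_i$ or in the open complement; this costs nothing by Lemma \ref{flat degeneration} applied to the translation as a flat family, or simply by invariance of cohomology under automorphisms. Then absorb the multiplicities supported on the $E_i$ into the divisor by iterating the inequality \eqref{transferring h^1}, reducing to a single fat point $\{q_{s+1}^{\mu_{s+1}}\}$ in the open stratum against the modified divisor $D-\sum_{i}\mu_i E_i$, whose vanishing is exactly Theorem \ref{toric tensored}. One must check that the modified divisor still satisfies the hypotheses of Theorem \ref{toric tensored}, but this is the same routine verification already flagged to the reader in the proof of Theorem \ref{tva theorem X0}, and the numerical bounds \eqref{tva equations X0}, \eqref{tva equations X0 large s} are preserved under the subtraction. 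This yields $h^1(D\otimes\II_Z)=0$ and hence $l$-jet ampleness, closing the cycle. The only genuinely new remark needed is that the vanishing established inside the Theorem \ref{tva theorem X0} proof was never restricted to schemes arising as degenerations of curvilinear length-$(l+1)$ schemes but in fact holds for all fat-point configurations of that weight, which is precisely the extra generality that distinguishes jet ampleness from very ampleness.
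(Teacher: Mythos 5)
Your proposal is correct and follows essentially the same route as the paper: the implication $(2)\Rightarrow(3)$ is Proposition \ref{jet implies very}, $(3)\Rightarrow(1)$ is the necessity argument from the second part of the proof of Theorem \ref{tva theorem X0}, and $(1)\Rightarrow(2)$ is obtained by observing that the vanishing $h^1(D\otimes\II_{\bar Z_0})=0$ established in the first part of that proof (via Theorem \ref{toric tensored}, the reduction \eqref{transferring h^1}, and Lemmas \ref{flat degeneration} and \ref{inclusion}) already applies to arbitrary fat-point schemes of total weight $l+1$. No substantive difference from the paper's argument.
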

\begin{proof}
We proved that the natural restriction map of the global sections of $D$ to the any fat point of multiplicity $l+1$ is surjective
 in Theorem \ref{toric tensored}, see also Remark \ref{jet spannedness}. We showed that the same is true in the case of
arbitrary collections of fat points whose multiplicity sum up to 
$l+1$ in the first part of the proof of Theorem
\ref{tva theorem X0}.  This proves that $(1)$ implies $(2)$. Moreover, $(2)$ implies $(3)$
 by Proposition \ref{jet implies very}. 
Finally, that $(3)$ implies $(1)$ was proved in the second part of the proof of Theorem \ref{tva theorem X0}.
\end{proof}

\section{Other positivity properties of divisors on $X_{s}$}\label{other positivity}

In this section we will apply Theorem \ref{tva theorem X0} to establish further positivity
properties of divisors on $X_{s}$. All results 
we prove in this section apply to $\mathbb{Q}-$divisors on the blown-up projective space.

We recall here the notation introduced in Section \ref{lva section}: for a given divisor $D$ 
of the form $D=dH-\sum_{i=1}^sm_iE_i,$ (cfr. \eqref{def D}),
we will use the integer $b=b(D):= \min\{n -s(d); s-n-2\}$, defined in (\ref{b bound}) 
and the bound \eqref{tva equations X0 large s}:
$$
\sum_{i=1}^s m_i -nd \le b_l.
$$

 \subsection{Semi-ampleness and ampleness}\label{section semiampleness}

A line bundle is ample if some positive  power is very ample.  
It is known that for smooth toric varieties a divisor is ample if and only if is very ample and nef if and only if is globally generated.
  From Corollary \ref{gg corollary} and Corollary \ref{va corollary}, we obtain
that this holds for a small number of points $s\le 2n$ too, as well as for 
arbitrary $s$ under a bound on the coefficients.

A line bundle is called \emph{semi-ample}, or \emph{eventually free}, if some positive  power is globally generated. By \eqref{gg equations X0}, one can see that a divisor is semi-ample if and only if it is globally generated.

\begin{theorem}
\label{ample cone}
 Let $X_{s}$ be defined as in Section \ref{notations}. Assume $s\le 2n$.
\begin{enumerate}
\item
The cone of semi-ample divisors in $\Nn^1(X_{s})_{\mathbb{R}}$  is given by 
\eqref{gg equations X0}.
\item 
The cone of ample divisors in $\Nn^1(X_{s})_{\mathbb{R}}$ is given by \eqref{va equations X0}.
\end{enumerate}

Assume $s\ge 2n+1$.
\begin{enumerate}
\item   Divisors satisfying 
\eqref{tva equations X0 large s} with $l=0$ 
are semi-ample if and only if \eqref{gg equations X0}.
\item 
Divisors satisfying \eqref{tva equations X0 large s} with $l=1$
are ample if and only if \eqref{va equations X0}.
\end{enumerate}

\end{theorem}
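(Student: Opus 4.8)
The plan is to reduce every assertion to the classification of $l$-very ample and globally generated divisors already obtained in Theorem \ref{tva theorem X0} and its Corollaries \ref{gg corollary} and \ref{va corollary}, exploiting that on the Mori Dream Space $X_{s,(0)}$ the positivity notions collapse in the expected way. First I would treat semi-ampleness. By the discussion preceding the theorem, a divisor on $X_{s,(0)}$ is semi-ample if and only if it is globally generated: the non-trivial direction is that some positive multiple being globally generated forces $D$ itself to be globally generated, which follows because the defining inequalities \eqref{gg equations X0} are homogeneous of degree one in $(d,m_1,\dots,m_s)$, so they hold for $D$ precisely when they hold for any positive multiple $kD$. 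Combined with Corollary \ref{gg corollary} this yields parts (1) of both the $s\le 2n$ and $s\ge 2n+1$ cases, the latter under the bound \eqref{tva equations X0 large s} with $l=0$.

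Next I would handle ampleness. Here the key input is Corollary \ref{va corollary}, characterising very ampleness by \eqref{va equations X0}. For $s\le 2n$, to obtain the ample cone I would argue that $D$ is ample if and only if it satisfies the strict inequalities \eqref{va equations X0}: if these hold for $D$ then they hold for every positive multiple $kD$, so each $kD$ is very ample by Corollary \ref{va corollary}, in particular $D$ is ample; conversely, ampleness of $D$ means some $kD$ is very ample, and since \eqref{va equations X0} is again homogeneous of degree one the inequalities transfer back down to $D$. The strictness of the inequalities in \eqref{va equations X0} (as opposed to the non-strict ones in \eqref{gg equations X0}) is exactly what matches the open condition of lying in the interior of the nef cone, by Kleiman's criterion. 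For $s\ge 2n+1$, the same reasoning applies after checking that passing between $D$ and its multiples preserves the auxiliary bound \eqref{tva equations X0 large s} with the appropriate value of $l$; since that bound is also homogeneous and the quantity $b=b(D)$ is insensitive to scaling when $s(d)$ is interpreted correctly, Corollary \ref{va corollary} applies to each relevant multiple.

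The main obstacle I anticipate is the bookkeeping for large $s$: the bound \eqref{tva equations X0 large s} involves $b(D)=\min\{n-s(d),s-n-2\}$, and $s(d)$ counts the indices with $m_i=d$, which is \emph{not} obviously invariant under replacing $D$ by $kD$ (the multiplicities that equalled $d$ may no longer equal $kd$ in the naive sense). The careful step is therefore to verify that once $D$ lies in the interior described by the strict inequalities \eqref{va equations X0}, the corresponding multiples still satisfy the hypothesis needed to invoke Corollary \ref{va corollary}, i.e. that the inequality \eqref{tva equations X0 large s} with $l=1$ is compatible with scaling in the regime where it is assumed. I expect this to be routine but delicate, and it is the only place where the argument is more than a direct homogeneity observation; everything else reduces cleanly to the already-established classification together with Kleiman's numerical characterisation of ampleness as the interior of the nef cone.
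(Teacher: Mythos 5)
Your overall strategy coincides with the paper's: the theorem is obtained by specialising Theorem \ref{tva theorem X0} through Corollaries \ref{gg corollary} and \ref{va corollary} and observing that semi-ampleness (resp.\ ampleness) is sandwiched between global generation (resp.\ very ampleness) and nefness (resp.\ positivity against curves). However, there is a concrete error in the step you yourself flag as delicate. You assert that the auxiliary bound \eqref{tva equations X0 large s} is ``homogeneous'' and hence compatible with replacing $D$ by $kD$. It is not: the left-hand side $\sum_i m_i-nd$ scales by $k$, while the right-hand side $b-1-l$ is a constant (and you are right that $b(kD)=b(D)$, since $km_i=kd$ iff $m_i=d$). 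So whenever $\sum_i m_i-nd>0$, the bound fails for $kD$ with $k\gg0$ even though it holds for $D$. This breaks your converse direction for $s\ge 2n+1$ as written: from ``some $kD$ is very ample (resp.\ globally generated)'' you want to deduce \eqref{va equations X0} (resp.\ \eqref{gg equations X0}) for $kD$ by invoking the ``only if'' half of Corollary \ref{va corollary} (resp.\ \ref{gg corollary}), but those corollaries, as stated, require the bound to hold for $kD$, which you cannot guarantee.

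The repair is short and avoids multiples of $D$ altogether, and is what the paper implicitly does (compare the proof of Theorem \ref{nef cone}). For the forward directions apply the corollaries to $D$ itself: under the stated hypotheses, \eqref{gg equations X0} gives that $D$ is globally generated, hence semi-ample, and \eqref{va equations X0} gives that $D$ is very ample, hence ample; no multiple is needed. For the converse directions, note that a semi-ample divisor is nef and an ample divisor meets every curve positively; intersecting $D$ with the class of a line inside $E_i$ gives $m_i$, and with the strict transform of the line through $p_i,p_j$ gives $d-m_i-m_j$, so nefness yields \eqref{gg equations X0} and ampleness yields the integral inequalities \eqref{va equations X0} directly, with no appeal to the corollaries for $kD$ and hence no need for the bound to propagate. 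With this substitution your argument is complete and agrees with the paper's.
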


\subsection{Nefness}\label{section nefness}

For any projective variety, Kleiman \cite{kleiman} showed that a divisor is ample if and only if its numerical equivalence class lies in the interior of the nef cone
 (see also \cite[Theorem 1.4.23]{lazarsfeld}).
 
For a line bundle, being  generated by the global sections implies being nef, but the opposite is not true in general,
see e.g. Example \ref{example}. However for line bundles on $X_{s}$, with $s\le 2n$,
 or with arbitrary $s$ under a bound on the coefficients, 
 these two properties are equivalent.

\begin{theorem}\label{nef cone}
In the same notation as Theorem \ref{tva theorem X0}, assume that for $D$ of the form 
\eqref{def D} we have that either $s\le 2n$ or  $s\ge 2n+1$ and 
\eqref{tva equations X0 large s} with $l=0$
is satisfied.
Then $D$  is nef if and only if is globally generated.

\end{theorem}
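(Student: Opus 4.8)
The plan is to show that nefness forces conditions \eqref{gg equations X0}, since the reverse implication (globally generated $\Rightarrow$ nef) is standard and the equivalence with global generation then follows directly from Corollary \ref{gg corollary}. So the whole content is: if $D=dH-\sum_i m_iE_i$ is nef (under the stated hypotheses on $s$), then $m_i\ge 0$ for all $i$ and $d-m_i-m_j\ge 0$ for all $i\ne j$. I would establish this by exhibiting, for each inequality that could fail, an explicit irreducible curve against which $D$ intersects negatively, contradicting nefness.

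First I would test the conditions $m_i\ge 0$. If some $m_i<0$, take a general line $\ell\subset E_i\cong\PP^{n-1}$ in the exceptional divisor. Since $E_i\cdot\ell$ records the coefficient structure, one computes $D\cdot\ell = -m_i\,(E_i\cdot\ell)$ with the appropriate sign convention so that $D\cdot\ell = m_i<0$, contradicting nefness. Next, to test $d-m_i-m_j\ge 0$, I would take the strict transform $\tilde\ell$ in $X_{s,(0)}$ of the line $L=\langle p_i,p_j\rangle\subset\PP^n$ through the two points $p_i,p_j$. Its class satisfies $\tilde\ell = H^{n-1}-\cdots$ with the property that $D\cdot\tilde\ell = d-m_i-m_j$, using that a general line through two of the blown-up points meets each of the two exceptional divisors $E_i,E_j$ transversally in one point and meets no other $E_k$. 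If $d-m_i-m_j<0$ this again contradicts nefness. These two families of curves are exactly the curves governing the base-locus conditions in Lemma \ref{base locus lemma}, which is the reason the numerical inequalities coincide with the global generation inequalities of Corollary \ref{gg corollary}.

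The one subtlety, and what I expect to be the main obstacle, is the hypothesis structure: for $s\le 2n$ the claim is an equivalence defining the whole nef cone, whereas for $s\ge 2n+1$ it is only claimed under the extra bound \eqref{tva equations X0 large s} with $l=0$. The curve-theoretic argument above shows nef $\Rightarrow$ \eqref{gg equations X0} unconditionally, so the real work is the converse direction, \eqref{gg equations X0} $\Rightarrow$ nef, which I would deduce from global generation: under the stated hypotheses Corollary \ref{gg corollary} says \eqref{gg equations X0} is equivalent to global generation, and globally generated implies nef. Thus the extra bound is needed precisely so that Corollary \ref{gg corollary} applies and upgrades the combinatorial inequalities to genuine global generation; without it, \eqref{gg equations X0} need not imply nefness (compare Example \ref{example}, where the nef anticanonical divisor on $\PP^2$ blown up at eight points fails to be globally generated).

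Assembling the pieces, the proof reads: (i) globally generated $\Rightarrow$ nef always; (ii) nef $\Rightarrow$ \eqref{gg equations X0} by the curve computations on $E_i$ and on strict transforms of lines $\langle p_i,p_j\rangle$; (iii) under the hypotheses on $s$, Corollary \ref{gg corollary} gives \eqref{gg equations X0} $\Leftrightarrow$ globally generated. Chaining these yields nef $\Leftrightarrow$ globally generated, and for $s\le 2n$ the identification of the nef cone with the polyhedron cut out by \eqref{gg equations X0} is immediate since those inequalities are closed and define a full-dimensional cone in $\Nn^1(X_{s,(0)})_{\mathbb{R}}$. The only calculation I would carry out carefully is the intersection number $D\cdot\tilde\ell=d-m_i-m_j$, verifying that a general line through $p_i,p_j$ avoids the other exceptional loci, which is where the general-position assumption on $\mathcal{S}$ enters.
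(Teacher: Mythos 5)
Your proposal is correct and follows essentially the same route as the paper: the paper's proof likewise tests nefness against lines in the exceptional divisors and strict transforms of lines through two of the points to deduce the inequalities \eqref{gg equations X0}, and then invokes Corollary \ref{gg corollary} to upgrade these to global generation, with the converse implication being standard. Your additional remarks on the role of the bound \eqref{tva equations X0 large s} and the explicit intersection computations are consistent with, and slightly more detailed than, the argument given in the paper.
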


\begin{proof}
If $D$ is nef, then for effective $1$-cycle $C$, $D\cdot C\geq 0$. 
In particular the divisor $D$ intersects positively the classes of lines through
 two points and classes of lines in the exceptional divisors. This means inequalities 
\eqref{gg equations X0} hold and therefore the divisor $D$ is globally generated by 
Corollary \ref{gg corollary}.
\end{proof}

\begin{remark}
If $s\le 2^n$, the nef cone of $X_{s}$ is given by  \eqref{gg equations X0}. In fact, if $X$ is the blow-up of $\PP^n$ in $s=2^n$ points obtained as intersection of $n$ quadrics of $\PP^n$, then the class of a quadric on $X$ through $2^n$ points is nef (because the corresponding linear system is base point free). Therefore the same holds for general points.

Notice that for $s\le 2n$, the description of the nef cone also follows from Theorem \ref{nef cone}. 
\end{remark}

\begin{corollary}\label{nef = sa}
The nef cone and the cone of semi-ample divisors on $X_{s}$, for $s\le 2n$, coincide. 
\end{corollary}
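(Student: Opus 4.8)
The statement to prove is Corollary \ref{nef = sa}: the nef cone and the semi-ample cone on $X_{s,(0)}$ coincide for $s \le 2n$.

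The plan is to deduce this directly from the two descriptions already established in the preceding results, showing that both cones are cut out by the same system of inequalities. First I would recall that, by Theorem \ref{ample cone} part (1), the cone of semi-ample divisors in $\Nn^1(X_{s,(0)})_{\mathbb{R}}$ is exactly the cone defined by the inequalities \eqref{gg equations X0}, namely $m_i \ge 0$ for all $i$ and $d - m_i - m_j \ge 0$ for all $i \ne j$. Next I would invoke Theorem \ref{nef cone}, whose final assertion states that when $s \le 2n$ the nef cone of $X_{s,(0)}$ is likewise given by \eqref{gg equations X0}. Since both cones are described by identical defining inequalities, they are literally the same subcone of $\Nn^1(X_{s,(0)})_{\mathbb{R}}$, and the corollary follows immediately.

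There is essentially no obstacle here, since the corollary is a packaging of results proved just above. The only point worth flagging is to confirm the logical direction already embedded in Theorem \ref{nef cone}: for a globally generated (hence semi-ample) divisor, nefness is automatic, so the semi-ample cone is contained in the nef cone without any hypothesis on $s$. The content of the $s \le 2n$ restriction is the reverse inclusion, that a nef divisor is forced to satisfy \eqref{gg equations X0} and is therefore globally generated by Corollary \ref{gg corollary}; this is exactly the argument carried out in the proof of Theorem \ref{nef cone} by intersecting $D$ against the lines through pairs of points and the lines inside the exceptional divisors. Combining the two inclusions gives the equality of cones.

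Thus the skeleton of the proof is: \textbf{(i)} by Theorem \ref{ample cone}(1) the semi-ample cone equals the cone \eqref{gg equations X0}; \textbf{(ii)} by Theorem \ref{nef cone} the nef cone equals the same cone \eqref{gg equations X0} when $s \le 2n$; \textbf{(iii)} conclude equality. The main (and only) subtlety, should one wish to make the corollary self-contained, would be to spell out the reverse inclusion via the 1-cycle test divisors, but since Theorem \ref{nef cone} already records this, the corollary is a one-line consequence.
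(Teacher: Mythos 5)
Your proposal is correct and follows exactly the route the paper intends: the corollary is an immediate consequence of Theorem \ref{ample cone}(1) and Theorem \ref{nef cone}, both of which describe the respective cones by the same inequalities \eqref{gg equations X0} when $s\le 2n$. The paper offers no separate proof for this corollary, and your observation about which inclusion actually uses the hypothesis $s\le 2n$ matches the argument already recorded in the proof of Theorem \ref{nef cone}.
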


\begin{corollary}\label{mori cone}
The Mori Cone of curves of the blown-up $\PP^n$ in $s\leq 2^n$ 
points,  $\overline{\NE}(X_{s})$, 
  is generated by the classes of lines through two points and the classes of lines in the exceptional divisors.
\end{corollary}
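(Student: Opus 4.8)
The plan is to realise $\NE(X_{s,(0)})$ as the dual of the nef cone, which has just been determined in Theorem \ref{nef cone}, and then to invoke cone duality. First I would record the intersection numbers of the two relevant families of curves against a divisor $D=dH-\sum_i m_iE_i$. For the strict transform $\ell_{ij}$ of a general line through $p_i$ and $p_j$ one has $H\cdot\ell_{ij}=1$ and $E_k\cdot\ell_{ij}=\delta_{ik}+\delta_{jk}$, so that $D\cdot\ell_{ij}=d-m_i-m_j$; for a line $f_i$ contained in the exceptional divisor $E_i\cong\PP^{n-1}$ one has $H\cdot f_i=0$ and $E_k\cdot f_i=-\delta_{ik}$, using $E_i|_{E_i}=\mathcal{O}_{E_i}(-1)$, so that $D\cdot f_i=m_i$. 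The purpose of this computation is to observe that the two families of inequalities $m_i\ge 0$ and $d-m_i-m_j\ge 0$ defining the globally generated (equivalently, by Theorem \ref{nef cone}, nef) cone in \eqref{gg equations X0} are \emph{exactly} the conditions $D\cdot f_i\ge 0$ and $D\cdot\ell_{ij}\ge 0$.

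Next I would set $\Sigma:=\Cone\bigl(\{f_i\}_{i}\cup\{\ell_{ij}\}_{i\ne j}\bigr)\subset\Nn_1(X_{s,(0)})_{\mathbb{R}}$, the finitely generated, hence closed, polyhedral cone spanned by these curve classes. By the previous step together with Theorem \ref{nef cone}, the nef cone of $X_{s,(0)}$ for $s\le 2n$ is precisely the dual cone $\Sigma^\vee=\{D:\ D\cdot v\ge 0\ \forall v\in\Sigma\}$ under the intersection pairing between $\Nn^1$ and $\Nn_1$. On the other hand, by the very definition of nefness, $\operatorname{Nef}(X_{s,(0)})=\bigl(\overline{\NE}(X_{s,(0)})\bigr)^\vee$. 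Thus the two closed convex cones $\Sigma$ and $\overline{\NE}(X_{s,(0)})$ have equal dual cones, and applying biduality for closed convex cones to the identity $\Sigma^\vee=\bigl(\overline{\NE}(X_{s,(0)})\bigr)^\vee$ yields $\overline{\NE}(X_{s,(0)})=\Sigma$.

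Finally I would close the circle of inclusions. Each $f_i$ and each $\ell_{ij}$ is an honest irreducible curve, so $\Sigma\subseteq\NE(X_{s,(0)})\subseteq\overline{\NE}(X_{s,(0)})=\Sigma$, whence all three cones coincide; in particular $\NE(X_{s,(0)})$ is already closed and equals $\Sigma$, which is the assertion. A pleasant feature of this route is that closedness of $\NE(X_{s,(0)})$ comes out for free from the sandwiching, without appealing to the Mori Dream Space property.

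The step I expect to demand the most care is the identification of the nef cone with $\Sigma^\vee$: one must be sure that \eqref{gg equations X0} records \emph{all} the facet inequalities and no spurious ones, so that no extremal curve class is omitted from the generating set, and that the ample cone is full-dimensional (guaranteed by Theorem \ref{ample cone}) so that $\Sigma$ is strongly convex and the pairing is non-degenerate on the relevant spans. Once these points are secured, the biduality argument is purely formal.
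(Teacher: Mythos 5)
Your argument is correct and is precisely the route the paper intends: the paper states Corollary \ref{mori cone} without proof as an immediate consequence of Theorem \ref{nef cone}, and the intended justification is exactly your cone-duality computation, namely that the inequalities \eqref{gg equations X0} cutting out the nef cone are the intersection conditions $D\cdot f_i\ge 0$ and $D\cdot\ell_{ij}\ge 0$, so that biduality identifies $\overline{\NE}(X_{s,(0)})$ with the cone spanned by these curve classes. Your filling in of the intersection numbers and the closedness sandwich is a faithful elaboration of the same approach, not a different one.
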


\subsection{Fujita's conjectures for the blown-up $\PP^n$ in points}

\begin{conjecture}[Fujita's conjectures, \cite{fujita}]\label{fujita}
Let $X$ be an $n$-dimensional projective algebraic variety, smooth or with mild singularities. 
Let $K_X$ be the canonical divisor of $X$ and  $D$ an ample divisor on $X$. Then the following holds.
\begin{enumerate}
\item For $m\ge n+1$, $mD+K_X$ is globally generated.
\item For $m\ge n+2$, $mD+K_X$ is very ample.
\end{enumerate}
\end{conjecture}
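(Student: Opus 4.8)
The plan is to reduce the global generation statement, part (1), to a local pointwise separation problem and to attack it with vanishing theorems for multiplier ideals. To show that $mD+K_X$ is globally generated it suffices, for each $x\in X$, to produce a section of $\mathcal{O}_X(mD+K_X)$ not vanishing at $x$, that is, to prove surjectivity of the restriction $H^0(X,mD+K_X)\to\mathcal{O}_X(mD+K_X)\otimes\mathcal{O}_x$. The standard device is to find an effective $\mathbb{Q}$-divisor $\Delta\equiv_{\mathbb{Q}}cD$ with $c<m$ such that the pair $(X,\Delta)$ is klt on $X\setminus\{x\}$ and has $\{x\}$ as an isolated log-canonical center. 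Its multiplier ideal $\mathcal{J}(X,\Delta)$ then coincides with the maximal ideal $\mathfrak{m}_x$ near $x$, and Nadel vanishing yields
\[
H^1\big(X,\mathcal{O}_X(mD+K_X)\otimes\mathcal{J}(X,\Delta)\big)=0,
\]
since $mD+K_X-\Delta\equiv_{\mathbb{Q}}(m-c)D$ is ample. The long exact sequence of $0\to\mathcal{J}\to\mathcal{O}_X\to\mathcal{O}_X/\mathcal{J}\to0$ twisted by $mD+K_X$ then produces the required surjection onto the fibre at $x$.

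The heart of the matter is constructing such a boundary $\Delta$ with the \emph{sharp} numerical constant $c$ arbitrarily close to $n$, so that $m=n+1$ works. First I would use ampleness together with asymptotic Riemann--Roch: since $h^0(X,kD)\sim k^n(D^n)/n!$, for $k\gg0$ one can impose a point of multiplicity of order $n$ at $x$ on a member of $|kD|$ and rescale, producing an effective $\mathbb{Q}$-divisor numerically $\sim cD$ and singular at $x$. The difficulty is that such a divisor is in general merely singular, not log-canonical with an \emph{isolated} center at $x$, so one must cut the minimal log-canonical center down to a point. Here I would follow the Angehrn--Siu inductive scheme: restrict to the minimal lc center $V$, invoke ampleness of $D|_V$ and subadjunction to add a further boundary lowering $\dim V$, and iterate, perturbing at each stage so that the center remains isolated, until $V=\{x\}$.

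The main obstacle is precisely the bookkeeping of the coefficient $c$ through this induction. Each step that drops the dimension of the log-canonical center consumes a positive amount of the coefficient, and with current techniques the total cost grows super-linearly in $n$: the Nadel--Angehrn--Siu method gives global generation only for $m$ of order $\tfrac12(n^2+n)$, and the subsequent refinements of Helmke, Heier, and (for threefolds) Ein--Lazarsfeld still fall short of the conjectural bound $m\ge n+1$ once $n\ge4$. Reaching the sharp constant would require controlling the log-canonical threshold of the constructed singularity with essentially no loss at each inductive step, and no such control is known in general. For part (2), very ampleness, the same strategy must separate two points together with first-order jets simultaneously, which tightens the numerical constraints further. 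I therefore expect that a complete proof of the conjecture exactly as stated is presently out of reach: the realistic targets are the surface case, where Reider's theorem settles both parts sharply, and effective but non-sharp bounds in higher dimension, while the full statement remains open.
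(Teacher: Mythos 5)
You have not produced a proof, and you were right not to: the statement you were given is labelled a \emph{conjecture} in the paper (Fujita's conjecture, \cite{fujita}), the paper itself offers no proof of it in this generality, and none exists in the literature. Your assessment of the state of the art is accurate: the reduction of global generation to an isolated log-canonical centre at $x$, Nadel vanishing for $\mathcal{J}(X,\Delta)$ twisted by $mD+K_X$, and the Angehrn--Siu cutting-down induction are indeed the standard machinery, and the coefficient bookkeeping you describe is exactly where the method loses against the sharp bound $m\ge n+1$; Reider's theorem settling the surface case is also correctly placed. So as a blind response to an open conjecture, your proposal is the honest and correct one.

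What is worth comparing is how the paper \emph{does} establish the conjecture in the special cases it claims, because its route is entirely disjoint from yours and costs essentially nothing. For $X_{s,(0)}$, the blow-up of $\PP^n$ at $s\le 2n$ general points (Proposition \ref{fujita answer}), and for divisors with $\sum_{i=1}^s m_i\le nd$ at arbitrarily many points (Proposition \ref{fujita answer2}), the argument is: (i) by a consequence of Mori's Cone Theorem, the \emph{weak} form of Fujita --- with ``globally generated'' replaced by ``nef'' and ``very ample'' by ``ample'' --- holds on any smooth projective variety; (ii) on these particular blow-ups, nefness is \emph{equivalent} to global generation and ampleness to very ampleness, by the explicit linear-inequality descriptions of the relevant cones (Theorems \ref{ample cone} and \ref{nef cone}, which rest on the $l$-very ampleness criterion of Theorem \ref{tva theorem X0} and ultimately on the cohomology vanishing of Theorem \ref{vanishing theorems}, proved by induction via restriction to hyperplanes through the points). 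No multiplier ideals, lc centres, or subadditivity enter; the special toric-like geometry converts the soft cone-theorem statement into the sharp conjecture. Your multiplier-ideal strategy, even if executed optimally, would give only the non-sharp Angehrn--Siu-type bounds on these same spaces, so in this restricted setting the paper's elementary route is strictly stronger; conversely, it has no purchase on general $X$, which is why the full conjecture remains open exactly as you say.
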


Fujita's conjecture hold on  any smooth variety where all nef divisors are semi-ample. In particular it holds on $X_s$ with $s\le 2n$. 
\begin{proposition}\label{fujita answer}
Let $X_{s}$ be the blown-up $\PP^{n}$ at $s$ points in  general position with $s\le 2n$. Conjecture \ref{fujita} holds for $X_{s}$.   
\end{proposition}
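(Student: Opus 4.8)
The plan is to reduce the statement to the explicit numerical characterisations of global generation and very ampleness already available for $X_{s,(0)}$ when $s\le 2n$, namely Corollary \ref{gg corollary}, Corollary \ref{va corollary} and Theorem \ref{ample cone}. First I would record the canonical class
\[
K_{X_{s,(0)}}=-(n+1)H+(n-1)\sum_{i=1}^s E_i,
\]
which follows from $K_{\PP^n}=-(n+1)H$ together with the discrepancy $n-1$ coming from blowing up a point (a centre of codimension $n$). By Theorem \ref{ample cone}(2), for $s\le 2n$ an ample divisor $D=dH-\sum_i m_iE_i$ is precisely one satisfying $m_i\ge1$ for all $i$ and $d-m_i-m_j\ge1$ for all $i\ne j$, so I may take these inequalities as the standing hypotheses on $D$.

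Next I would rewrite the adjoint divisor in the same coordinates,
\[
mD+K_{X_{s,(0)}}=\bigl(md-(n+1)\bigr)H-\sum_{i=1}^s\bigl(mm_i-(n-1)\bigr)E_i=:d'H-\sum_{i=1}^s m_i' E_i .
\]
Because $s\le 2n$, the hypotheses of Corollary \ref{gg corollary} and Corollary \ref{va corollary} impose no extra bound on the coefficients, so those corollaries apply verbatim to $d'H-\sum_i m_i'E_i$ and reduce part (1) (respectively part (2)) of Conjecture \ref{fujita} to checking $m_i'\ge0$ and $d'-m_i'-m_j'\ge0$ (respectively $m_i'\ge1$ and $d'-m_i'-m_j'\ge1$). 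The one computation worth isolating is the identity
\[
d'-m_i'-m_j'=m(d-m_i-m_j)+(n-3),
\]
obtained from $-(n+1)+2(n-1)=n-3$.

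The verification is then immediate from the bounds on $m$. For part (1), with $m\ge n+1$, the hypothesis $m_i\ge1$ gives $m_i'=mm_i-(n-1)\ge m-(n-1)\ge 2>0$, while $d-m_i-m_j\ge1$ gives $d'-m_i'-m_j'\ge m+(n-3)\ge 2n-2\ge0$ for all $n\ge1$; hence $mD+K_X$ is globally generated. For part (2), with $m\ge n+2$, the same inputs yield $m_i'\ge m-(n-1)\ge 3\ge1$ and $d'-m_i'-m_j'\ge m+(n-3)\ge 2n-1\ge1$; hence $mD+K_X$ is very ample.

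There is no genuine analytic obstacle here: the argument is a finite inequality check, and the substantive content is carried by the previously established cone descriptions. The points requiring care are purely bookkeeping. I must confirm that $mD+K_X$ really falls within the regime where Corollaries \ref{gg corollary} and \ref{va corollary} apply, which for $s\le 2n$ is automatic, and I should note that the inequalities above in particular force $d'>0$. The degenerate range $s\le n+1$ is already covered by Payne's toric result, and when $s\le 1$ (including the case $X=\PP^n$) there is no pairwise condition to verify, so the first inequality alone suffices. Thus the only subtlety is getting the sign and the constant $n-3$ in the adjoint computation correct and observing that the uniform lower bounds $2n-2\ge0$ and $2n-1\ge1$ hold for every $n\ge1$.
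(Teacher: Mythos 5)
Your proof is correct, but it takes a genuinely different route from the paper's. The paper's proof is two lines: it invokes the standard consequence of Mori's Cone Theorem that on any smooth projective variety $mD+K_X$ is nef for $m\ge n+1$ and ample for $m\ge n+2$ whenever $D$ is ample, and then concludes by the equivalences nef $\Leftrightarrow$ globally generated and ample $\Leftrightarrow$ very ample, which hold on $X_{s,(0)}$ for $s\le 2n$ by Theorem \ref{ample cone} and Theorem \ref{nef cone}. You bypass the cone theorem entirely: you take the numerical characterization of ampleness from Theorem \ref{ample cone}, compute the coefficients of $mD+K_X$ explicitly, and verify the inequalities of Corollary \ref{gg corollary} and Corollary \ref{va corollary} by hand. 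Your arithmetic is right: $-(n+1)+2(n-1)=n-3$, giving $m_i'\ge 2$ and $d'-m_i'-m_j'\ge 2n-2$ in part (1), respectively $m_i'\ge 3$ and $d'-m_i'-m_j'\ge 2n-1$ in part (2), all valid for $n\ge 1$. What the paper's argument buys is brevity and uniformity (no coefficient bookkeeping at all); what yours buys is self-containedness and slightly more information, since it shows the adjoint bundle satisfies the global-generation and very-ampleness inequalities with a margin. Both arguments ultimately rest on the same substantive input, namely Theorem \ref{tva theorem X0} and its corollaries, so neither is more general than the other; your remark that the degenerate cases $s\le 1$ need separate (trivial) words is a fair point, though the same caveat is implicit in the paper's own use of Theorem \ref{ample cone}.
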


\begin{proof}

For $X_{s}$, $s\le 2n$, global generation (very ampleness) is equivalent to nefness (resp. ampleness), by Theorem \ref{ample cone} and Theorem \ref{nef cone}. This concludes the proof.
\end{proof}

Using the results from this article, we can extend the above to an infinite family of divisors with arbitrary $s$.

\begin{proposition}\label{fujita answer2}
Let $X_{s}$ be the blown-up $\PP^{n}$ in an arbitrary number of points in 
general position, $s$, and let $D$ be a divisor on $X_{s}$ such that 
\begin{equation}\label{zero bound}
\sum_{i=1}^s m_i\leq nd
\end{equation}
Then Conjecture \ref{fujita} holds for  $D$.
\end{proposition}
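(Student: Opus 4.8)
The plan is to reduce both statements of Conjecture \ref{fujita} for the divisor $D':=mD+K_{X_{s,(0)}}$ to the explicit characterisations of global generation and very ampleness obtained earlier, Corollary \ref{gg corollary} and Corollary \ref{va corollary}. I would start from the canonical class $K_{X_{s,(0)}}=-(n+1)H+(n-1)\sum_{i=1}^s E_i$, so that for $D=dH-\sum_{i=1}^s m_iE_i$ one has
\[
mD+K_{X_{s,(0)}}=(md-(n+1))H-\sum_{i=1}^s\bigl(mm_i-(n-1)\bigr)E_i=:d'H-\sum_{i=1}^s m_i'E_i.
\]
Since $D$ is ample, intersecting it with the class of a line contained in $E_i$ and with the strict transform of a line through $p_i$ and $p_j$ yields $m_i\ge 1$ and $m_i+m_j\le d-1$ for all $i\ne j$; these are the only inputs I will need from ampleness for the numerical part.

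Next I would verify the inequalities \eqref{gg equations X0} for $D'$ in the range $m\ge n+1$, and \eqref{va equations X0} in the range $m\ge n+2$. From $m_i\ge 1$ one gets $m_i'=mm_i-(n-1)\ge m-(n-1)\ge 2$, and from $m_i+m_j\le d-1$ one computes
\[
d'-m_i'-m_j'=m(d-m_i-m_j)+(n-3)\ge m+(n-3),
\]
which is $\ge 0$ for $m\ge n+1$ and $\ge 1$ for $m\ge n+2$. Thus $D'$ satisfies \eqref{gg equations X0} when $m\ge n+1$ and \eqref{va equations X0} when $m\ge n+2$, and it remains only to check the hypotheses of the two corollaries. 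For $s\le 2n$ there is nothing to verify and the corollaries apply at once, recovering Proposition \ref{fujita answer}; so assume $s\ge 2n+1$.

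For $s\ge 2n+1$ the corollaries require the bound $\sum_{i=1}^s m_i'-nd'\le b(D')-1$ for global generation and $\le b(D')-2$ for very ampleness, where $b$ is as in \eqref{b bound}. A direct computation, using the standing hypothesis \eqref{zero bound}, gives
\[
\sum_{i=1}^s m_i'-nd'=m\Bigl(\sum_{i=1}^s m_i-nd\Bigr)-s(n-1)+n(n+1)\le -s(n-1)+n(n+1).
\]
Since $m(d-m_i)\ge 2m\ge 2(n+1)>2$, no multiplicity $m_i'$ equals $d'$, hence $b(D')=\min\{n,\,s-n-2\}$; comparing $-s(n-1)+n(n+1)$ with $\min\{n,s-n-2\}-1$ (resp. $-2$) is then an elementary estimate that I expect to hold for every $s\ge 2n+1$ when $n\ge 3$, and for every $s\ge 6$ when $n=2$.

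The one remaining configuration\,---\,and the step I expect to be the genuine obstacle\,---\,is $n=2$, $s=5$ at the boundary $\sum m_i=2d$, where the crude estimate above fails by exactly one unit and \eqref{zero bound} alone is insufficient. Here I would use ampleness more sharply: through five general points of $\PP^2$ there passes a unique conic, the rational normal curve of degree $2$, whose strict transform has class $2H-\sum_{i=1}^5E_i$ and satisfies $D\cdot\bigl(2H-\sum_{i=1}^5 E_i\bigr)=2d-\sum_{i=1}^5 m_i$. Ampleness forces this intersection number to be strictly positive, so in fact $\sum m_i\le 2d-1$, which restores the missing unit of slack and validates the bound. With the hypotheses of the corollaries now confirmed in all cases, Corollary \ref{gg corollary} gives that $mD+K_{X_{s,(0)}}$ is globally generated for $m\ge n+1$ and Corollary \ref{va corollary} gives that it is very ample for $m\ge n+2$, which is exactly Conjecture \ref{fujita}.
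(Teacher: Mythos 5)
Your proposal follows the same route as the paper's proof: write $mD+K_X=d'H-\sum m_i'E_i$ with $d'=md-(n+1)$, $m_i'=mm_i-(n-1)$, use ampleness of $D$ to get $m_i\ge 1$ and $m_i+m_j\le d-1$, verify \eqref{gg equations X0} (resp.\ \eqref{va equations X0}) for $m\ge n+1$ (resp.\ $m\ge n+2$), and check the degree bound $\sum m_i'-nd'=m(\sum m_i-nd)+n(n+1)-s(n-1)$ against $b(mD+K_X)-1-l$ so that Theorem \ref{ample cone} applies. The difference is that you carry this out more carefully than the paper does, and in one place your extra care matters. The paper's displayed chain weakens the estimate to $\sum m_i'-nd'\le n-1$ and then asserts the hypotheses of Theorem \ref{ample cone} are met; but for $s=2n+1$ one has $b=n-1$, so the requirement is $\le n-2$ for global generation and $\le n-3$ for very ampleness, and the weakened bound $n-1$ does not literally suffice. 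You correctly work with the sharper intermediate quantity $n(n+1)-s(n-1)$, which closes the case $n\ge3$ (and all $s\ge 2n+2$ for $n=2$), and you also verify $s(d')=0$ so that $b(mD+K_X)=\min\{n,s-n-2\}$, a point the paper states without justification. Finally, your treatment of $n=2$, $s=5$ — where $n(n+1)-s(n-1)=1$ exceeds both thresholds and \eqref{zero bound} alone is genuinely insufficient — via $D\cdot(2H-\sum_{i=1}^5E_i)>0$ for the conic through the five points is a correct and necessary supplement that the paper's argument does not supply. In short: same strategy, but your version is the more complete proof.
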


\begin{proof}
It is enough to consider the case $s\geq 2n+1$. 
Write $X=X_{s}$. Notice that the divisor $mD+K_X$ has the following properties
\begin{align*}
\sum_{i=1}^s (mm_i - n+1) - n(md-n-1) &=
m(\sum_{i=1}^s m_i - n d)+n(n+1) - s(n-1)\\
&\leq n(n+1)-(2n+1)(n-1)\\
&=(-n^2+n)+(n+1)\\
&\leq -2+n+1\\
&=n-1.\\
\end{align*}

Notice that $b(mD+K_X)=n-1$, for $s=2n+1$ and $b(D)=n$ for $s\geq 2n+2$, using 
the definition \eqref{b bound}.
Therefore $mD+K_X$ satisfies conditions of
Theorem \ref{ample cone}. We now leave it to the reader to check that if $D$ is ample then the divisor $mD+K_X$ satisfies conditions \eqref{gg equations X0} and \eqref{va equations X0}.

\end{proof}


\begin{thebibliography}{99}

\bibitem{angelini}
F.~Angelini,
{\it Ample divisors on the blow up of $\mathbb{P}^3$ at points}, 
Manuscripta Math. 93, (1997), 39--48. 




\bibitem{ba}
 E.~Ballico {\it Ample divisors on the blow up of $\mathbb{P}^n$ at points}, Proceedings of the American Mathematical Society, Volume 127, no. 9 (1999), 2527--2528.

\bibitem{baco}
 E.~Ballico, M.~Coppens {\it Very ample line bundles on blown - up projective
varieties}, Bull. Belg. Math. Soc. Simon Stevin, Volume 4, no. 4 (1997), 437--447.


\bibitem{badp}
 E.~Ballico, O.~Dumitrescu and E.~Postinghel, 
{\it On Segre's bound for fat points in $\mathbb{P}^n$}, 
 J. Pure Appl. Algebra. 220, no. 6, 2307–2323 (2016)


\bibitem{bfs}
M.~C.~Beltrametti, P.~Francia and A.~J.~Sommese, 
{\it On Reider's method and higher order embedding},
Duke Math. J. 58, 425--439 (1989)

\bibitem{belsom}
M.~C.~Beltrametti and A.~J.~Sommese,
{\it On $k$-jet ampleness}, Complex analysis and geometry, ed. by
V. Ancona and A. Silva, Plenum Press, New York, (1993), 355--376


\bibitem{bs}
M.~C.~Beltrametti and A.~J.~Sommese,
{\it  Notes on Embeddings of Blowups}, 
J. Alg., 186:861--871, (1996)



\bibitem{bdp1}
 M.C.~Brambilla, O.~Dumitrescu and E.~Postinghel, {\it On a notion of speciality of linear systems in $\mathbb{P}^n$},
Trans. Am. Math. Soc. no. 8, 5447--5473 (2015).

\bibitem{bdp2}
 M.C.~Brambilla, O.~Dumitrescu and E.~Postinghel,
 {\it On linear systems of $\PP^3$ with nine base points},
Ann. Mat. Pura Appl, 195, no. 5, 1551–1574 (2016) 
 
\bibitem{bdp3} M. C. Brambilla, O. Dumitrescu and E. Postinghel,
{\it On the effective cone of $\mathbb{P}^n$ blown-up at $n+3$ points},
 Exp. Math. 25, no. 4, 452–465 (2016)

\bibitem{CDDGP} S.~Cacciola, M.~Donten-Bury, O.~Dumitrescu, A.~Lo Giudice, J.~Park, {\it Cones of divisors of blow-ups of projective spaces}, Matematiche (Catania) 66 (2011), no. 2, 153--187.




\bibitem{CC}
L.~Chiantini, C.~Ciliberto, {\it Weakly defective varieties}, Trans. Am. Math. Soc. 354 (2001), 151–178.



\bibitem{coppens}
M.~Coppens. 
{\it Very ample linear systems on blowing-up at general points of projective spaces},
Canad. Math. Bull., 45(3):349--354, (2002).

\bibitem{coppens2}
M.~Coppens. 
{\it Very ample linear systems on blowing-up at general points of smooth
projective varieties}, Pacific J. Math., 202(2):313--327, (2002)




\bibitem{DeLa1}
C.~De Volder, A.~Laface,
{\it A note on the very ampleness of complete linear systems on blowings-up of $\mathbb{P}^3$}
Projective Varieties with Unexpected Properties, de Gruyter, Berlin (2005), pp. 231--236


\bibitem{dirocco}
S.~di Rocco, 
{\it $k$-very ample line bundles on del Pezzo surfaces}, 
Math. Nachrichten 179, 47--56, (1996).



\bibitem{dp}
O.~Dumitrescu and E.~Postinghel, {\it Vanishing theorems for linearly obstructed divisors}, J. Algebra, 477, 312-359 (2017)

\bibitem{dp-pos2}
O.~Dumitrescu and E.~Postinghel, {\it Positivity of divisors on blown-up projective spaces, II }, preprint. 



\bibitem{fujita}
T. Fujita, 
{\it On polarized manifolds whose adjoint bundles are not semipositive},
Algebraic geometry, Sendai 1985, Adv. Stud. Pure Math. 10, North-Holland, Amsterdam (1987)



\bibitem{Ha}
 B.~ Harbourne {\it Very Ample Divisors on Rational Surfaces}, Math. Ann. 272, 139--153, (1985)
 
\bibitem{hartshorne}
 R.~ Hartshorne, {\it Algebraic geometry}, Graduate Texts in Mathematics, No. 52. Springer-Verlag, New York-Heidelberg, (1977)
 





\bibitem{kleiman}
S.~L.~Kleiman,
{\it Toward a numerical theory of ampleness}, 
Ann. of Math. (2) 84 (1966), 293--344.



 
\bibitem{lazarsfeld}
R.~Lazarsfeld.
{\it Positivity in algebraic geometry. I}, vol. 48 of
Ergebnisse der Mathematik und ihrer Grenzgebiete. 3. Folge. A Series of Modern Surveys in Mathematics [Results in Mathematics and Related Areas. 3rd Series. A Series of Modern Surveys in Mathematics].
Springer-Verlag, Berlin (2004)


\bibitem{Mu}
S. Mukai, {\it Finite generation of the Nagata invariant rings in A-D-E
cases}, 2005, RIMS Preprint \# 1502.



\bibitem{mustata}
M. Mustata.  {\it Jet schemes of locally complete intersection canonical singularities}, Invent. Math., 145(3):397--424, (2001)


\end{thebibliography}
\end{document}